\newtheorem{theorem}{Theorem}[section]
\newtheorem{corollary}{Corollary}[section]
\newtheorem{lemma}{Lemma}[section]
\newtheorem{proposition}{Proposition}[section]
\theoremstyle{definition}
\newtheorem{definition}{Definition}
\newtheorem{remark}{Remark}
\newtheorem{example}{Example}
\title{\huge \bf On Generalized Reversed Aging\\ Intensity Functions\\ 
}
\author{
{\sc Francesco Buono}\\
 Universit\`a di Napoli Federico II\\
 Italy\\
\and {\sc Maria Longobardi}\\
Universit\`a di Napoli Federico II\\
 Italy\\
\and {\sc Magdalena Szymkowiak}\\
Poznan University of Technology\\
 Poland\\
}
\begin{document}
\date{\today}
\maketitle
\begin{abstract}
The reversed aging intensity function is defined as the ratio of the instantaneous reversed hazard rate to the baseline value of the reversed hazard rate. It analyzes the aging property quantitatively, the higher the reversed aging intensity, the weaker the tendency of aging. In this paper, a family of generalized reversed aging intensity functions is introduced and studied. Those functions depend on a real parameter. If the parameter is positive they characterize uniquely the distribution functions of univariate positive absolutely continuous random variables, in the opposite case they characterize families of distributions.  Furthermore, the generalized reversed aging intensity orders are defined and studied. Finally, several numerical examples are given.
\end{abstract}

\medskip\noindent
{\em Keywords:\/} 
Generalized reversed aging intensity, Reversed hazard rate, Generalized Pareto distribution, Generalized reversed aging intensity order.

\medskip\noindent
{\em AMS Subject Classification}: 60E15, 60E20, 62N05

\section{Introduction}

Let $X$ be a non--negative and absolutely continuous random variable with cumulative distribution function (cdf) $F$, probability density function (pdf) $f$ and survival function (sf) $\overline F$. In reliability theory $F$ is also known as unreliability function whereas $\overline F$ as reliability function. In this context a great importance has the hazard rate function $r$ of $X$, also known as the force of mortality or the failure rate,  where $X$ is the survival model of a life or a system being studied.  This definition will cover discrete survival models as well as mixed survival models. In the same way we define the reversed hazard rate $\breve{r}$ of $X$, that has attracted the attention of researchers. In a certain sense it is the dual function of the hazard rate and it bears some interesting features useful in reliability analysis (see also Block and Savits 1998; Finkelstein 2002). 

Let $X$ be a random variable with sf $\overline F$ and cdf $F$. We define, for $x$ such that $\overline F(x)>0$, the hazard rate function of $X$ at $x$, $r(x)$, in the following way:
\begin{align*}
r(x) &= \lim_{\Delta x\to 0^+}\frac{\mathbb P(x<X\leq x+\Delta x|X>x)}{\Delta x}\\
&= \frac{1}{\overline F(x)}\lim_{\Delta x\to 0^+}\frac{\mathbb P(x<X\leq x+\Delta x)}{\Delta x}.
\end{align*}
Moreover we define, for $x$ such that $F(x)>0$, the reversed hazard rate function of $X$ at $x$, $\breve{r}(x)$ (see Bartoszewicz 2009 for the notation), in the following way:
\begin{align*}
\breve{r}(x)&=\lim_{\Delta x\to 0^+}\frac{\mathbb P(x-\Delta x<X\leq x|X\leq x)}{\Delta x}\\
&=\frac{1}{F(x)}\lim_{\Delta x\to 0^+}\frac{\mathbb P(x-\Delta x<X\leq x)}{\Delta x}.
\end{align*}
The reversed hazard rate $\breve r(x)$ can be treated as the instantaneous failure rate occurring immediately before the time point $x$ (the failure occurs just before the time point $x$, given that the unit has not survived longer than time $x$).

So, if $X$ is an absolutely continuous random variable with density $f$, for $x$ such that $\overline F(x)>0$, the hazard rate function is 
\begin{equation}
r(x)=\frac{1}{\overline F(x)}\lim_{\Delta x\to 0^+}\frac{\mathbb P(x<X\leq x+\Delta x)}{\Delta x}=\frac{f(x)}{\overline F(x)},
\end{equation}
while, for $x$ such that $F(x)>0$, the reversed hazard rate function is 
\begin{equation}
\breve r(x)=\frac{1}{F(x)}\lim_{\Delta x\to 0^+}\frac{\mathbb P(x-\Delta x<X\leq x)}{\Delta x}=\frac{f(x)}{F(x)}.
\end{equation}

By the hazard rate function we introduce the aging intensity function $L$ that is defined for $x>0$ as
\begin{equation}
L(x)=\frac{-x f(x)}{\overline{F}(x) \log \overline{F}(x)}=\frac{-x r(x)}{\log \overline{F}(x)},
\end{equation}
where $\log$ denotes the natural logarithm.
It can be showed that the survival function of an absolutely continuous random variable and its aging intensity function are related by a relationship and that under some conditions a function determines a family of survival functions and it is their aging intensity function, for more details see Szymkowiak (2018a).

The reversed aging intensity function $\breve L(x)$ is defined, for $x>0$, as follows (see also Rezaei and Khalef 2014)
\begin{equation}
\label{eq3}
\breve L(x)=\frac{-xf(x)}{F(x)\log F(x)}=\frac{-x\breve r(x)}{\log F(x)}.
\end{equation}

The reversed aging intensity function can be expressed also in a different way by observing that the cumulative reversed hazard rate function defined as
$$\breve R(x)=\int_x^{+\infty} \breve r(t) \ \mathrm dt = \log F(t)\big| _{t=x}^{t\rightarrow+\infty}=-\log F(x),$$
can be treated as the total amount of failures accumulated after the time point $x$. So $\breve H(x)=\frac{1}{x}\breve R(x)$, being the proportion between the total amount of failures accumulated after the time point $x$ and the time $x$ for which the unit is still survived, can be considered as the baseline value of the reversed hazard rate. Then, \eqref{eq3} can be written as $$\breve L(x)=\frac{x\breve r(x)}{\breve R(x)}=\frac{\breve r(x)}{\breve H(x)},$$
and so the reversed aging intensity function, defined as the ratio of the instantaneous reversed hazard rate $\breve{r}$ to the baseline value of the reversed hazard rate $\breve{H}$, expresses the units average aging behavior: the higher the reversed aging intensity (it means the higher the instantaneous reversed hazard rate, and the smaller the total amount of failures accumulated after the time point $x$, and the higher the the time $x$ for which the unit is still survived),  the weaker the tendency of aging.

It is the analogous for the future of the aging intensity function, introduced and studied by Bhattacharjee, Nanda and Misra (2013), Jiang, Ji and Xiao (2003), Nanda, Bhattacharjee and Alam (2007) and Szymkowiak (2018a). The concept of aging intensity function was generalized by Szymkowiak (2018b). In Section 2, we define the generalized reversed aging intensity functions and in the particular case in which the random variable has a generalized Pareto distribution function we generalize our results and study monotonicity properties. In Section~3, we give some characterizations with use of our new aging intensities. Some examples of characterization are given in Section 4. In Section 5, we study the family of new stochastic orders called $\alpha$--generalized reversed aging intensity orders. Then, in Section 6 we present examples of analysis of $\alpha$--generalized reversed aging intensity through generated and real data.

\section{Generalized reversed aging intensity functions}

Let, for $x>0$, $W_0(x)=1-\exp(-x)$, i.e., $W_0$ is the distribution function of an exponential variable with parameter 1, so $\breve R(x)=W_0^{-1}(1-F(x))$. In fact, $W_0^{-1}(x)=-\log(1-x)$ and so
\begin{equation*}W_0^{-1}(1-F(x))=-\log F(x)=\breve R(x).
\end{equation*}
Replacing $W_0$ with a strictly increasing distribution function $G$ with density $g$, it is possible to generalize the concepts of reversed hazard rate function, cumulative reversed hazard rate function and reversed aging intensity function. The generalization of the hazard rate function was introduced by Barlow and Zwet (1969a, 1969b).

\begin{definition}
Let $X$ be a non--negative and absolutely continuous random variable with cdf $F$. Let $G$ be a strictly increasing distribution function with density $g$. We define the $G$--generalized cumulative reversed hazard rate function, $\breve R_{G}$, the $G$--generalized reversed hazard rate function, $\breve r_{G}$, the $G$--generalized reversed aging intensity function, $\breve L_{G}$, of $X$ as
\begin{align}
&\breve R_{G}(x)=G^{-1}(1-F(x)),\\
&\breve r_{G}(x)=-\frac{\mathrm d \breve R_{G}(x)}{\mathrm dx}=\frac{f(x)}{g(G^{-1}(1-F(x)))},\\
&\breve L_{G}(x)=\frac{x\breve r_{G}(x)}{\breve R_{G}(x)}=\frac{xf(x)}{g(G^{-1}(1-F(x))) G^{-1}(1-F(x))}.
\end{align}
\end{definition}
A very interesting case, because it provides intuitive results, is the one in which the distribution function $G$ is the distribution function of a generalized Pareto distribution.

\begin{definition}
A random variable $X_\alpha$ follows a generalized Pareto distribution with parameter $\alpha\in\mathbb R$ if the distribution function $W_\alpha$ is expressed as (see Pickands 1975):
$$W_\alpha(x)=\begin{cases} 1-(1-\alpha x)^{\frac{1}{\alpha}}, & \mbox{for }\begin{cases} x>0, & \mbox{if }\alpha<0 \\ 0<x<\frac{1}{\alpha}, & \mbox{if } \alpha>0 
\end{cases} \\ 1-\exp(-x), & \mbox{for } x>0  \mbox{ if } \alpha=0
\end{cases}$$
\end{definition}

\begin{remark}
For $\alpha=0$ we have the distribution function of an exponential variable with parameter $1$.
\end{remark}

From the distribution function it is possible to obtain the quantile and the density function. In particular we have
$$W_{\alpha}^{-1}(x)=\begin{cases} \frac{1}{\alpha}[1-(1-x)^{\alpha}], & \mbox{for } 0<x<1, \mbox{ if }\alpha\ne0 \\ -\log(1-x), & \mbox{for } 0<x<1,  \mbox{ if } \alpha=0
\end{cases}$$
$$w_{\alpha}(x)=\begin{cases} (1-\alpha x)^{\frac{1-\alpha}{\alpha}}, & \mbox{for }\begin{cases} x>0, & \mbox{if }\alpha<0 \\ 0<x<\frac{1}{\alpha}, & \mbox{if } \alpha>0 
\end{cases} \\ \exp(-x), & \mbox{for } x>0,  \mbox{ if } \alpha=0
\end{cases}$$

Let $X$ be a non--negative and absolutely continuous random variable with cdf $F$ and pdf $f$. Then it is possible to determine the $W_\alpha$-~generalized cumulative reversed hazard rate function and the $W_\alpha$-~generalized reversed hazard rate function in the following way:
$$\breve R_{W_\alpha}(x)=W^{-1}_{\alpha} (1-F(x))=\begin{cases} \frac{1}{\alpha}[1-F^{\alpha}(x)], & \mbox{for } x>0, \mbox{ if }\alpha\ne0 \\ -\log F(x), & \mbox{for } x>0,  \mbox{ if } \alpha=0
\end{cases}$$
$$\breve r_{W_\alpha}(x)=-\frac{\mathrm d \breve{R}_{\alpha}(x)}{\mathrm dx}=F^{\alpha-1}(x)f(x), \mbox{ for } x>0.$$
For the sake of simplicity, those functions can be, respectively, indicated by $\breve R_{\alpha}$, $\breve r_{\alpha}$ and we can refer to them as the $\alpha$--generalized cumulative reversed hazard rate function and the $\alpha$--generalized reversed hazard rate function.

\begin{remark}
The $1$--generalized reversed hazard rate function is equal to the density function. In fact the density function gives a first rough illustration of the aging tendency of the random variable by its monotonicity. The $0$--generalized reversed hazard rate function is equal to the usual reversed hazard rate function.
\end{remark}

From these functions, it is possible to introduce the $\alpha$--generalized reversed aging intensity function
\begin{equation}
\label{eq4}
\breve L_{\alpha}(x)=\frac{\breve{r}_{\alpha}(x)}{\frac{1}{x}\breve{R}_{\alpha}(x)}=\begin{cases} \frac{\alpha xF^{\alpha-1}(x)f(x)}{1-F^{\alpha}(x)}, & \mbox{for } x>0, \mbox{ if }\alpha\ne0 \\ \frac{-xf(x)}{F(x)\log F(x)}, & \mbox{for } x>0,  \mbox{ if } \alpha=0
\end{cases}
\end{equation}
The $\alpha$--generalized reversed aging intensity function describes the relationship between the instantaneous value of the $\alpha$--generalized reversed hazard rate function $\breve{r}_{\alpha}(x)$ and the baseline value of the $\alpha$--generalized reversed hazard rate function $\frac{1}{x}\breve{R}_{\alpha}(x)$. The higher the $\alpha$--generalized reversed aging intensity function (it means the higher the actual value of the $\alpha$--generalized reversed hazard rate function respect to its baseline value), the weaker the tendency of aging. Moreover, the $\alpha$--generalized reversed aging intensity function can be treated as the elasticity (see Sydsaeter and Hammond 2012), except for the sign, of the $\alpha$--generalized cumulative reversed hazard rate function, i.e., it indicates how much the function $\breve R_{\alpha}$ changes if $x$ changes by a small amount.

We recall the definition of $\alpha$--generalized aging intensity functions, $L_{\alpha}$. These functions are defined by Szymkowiak (2018b) in the following way
\begin{equation}
L_{\alpha}(x)=\begin{cases} \frac{\alpha x(1-F(x))^{\alpha-1}f(x)}{1-(1-F(x))^{\alpha}}, & \mbox{for } x>0, \mbox{ if }\alpha\ne0 \\ \frac{-xf(x)}{(1-F(x))\log(1-F(x))}, & \mbox{for } x>0,  \mbox{ if } \alpha=0
\end{cases}
\end{equation}

\begin{remark}
\label{rem}
The $0$--generalized reversed aging intensity function is equal to the usual reversed aging intensity function. If $\alpha=1$ we have
$$\breve L_{1}(x)=\frac{xf(x)}{\overline F(x)},$$
i.e., it is the negative of the elasticity of the survival function $\overline F$, they are equal in modulus. 

If $\alpha=n\in\mathbb N$ we have
$$\breve L_{n}(x)=\frac{n xF^{n-1}(x)f(x)}{1-F^{n}(x)},$$
where the denominator is the survival function of the largest order statistic for a sample of $n$ i.i.d. variables, while the numerator is composed by $x$ multiplied for the density of this order statistic. So $\breve L_{n}$ can be considered as the negative of the elasticity for the survival function of the largest order statistic. 

If $\alpha=-1$ we have
$$\breve L_{-1}(x)=\frac{-x(F(x))^{-2}f(x)}{1-(F(x))^{-1}}=\frac{-xf(x)}{F^2(x)\frac{F(x)-1}{F(x)}}=\frac{xf(x)}{F(x)(1-F(x))},$$
and so
$$\breve L_{-1}(x)=x LOR_X(x)=L_{-1}(x),$$
where $LOR_X$ is the log-odds rate of $X$ (see Zimmer, Wang and Pathak 1998).
\end{remark}

The next proposition analyzes the monotonicity of $\alpha$--generalized reversed aging intensity functions respect to the parameter $\alpha$. This result could be important if we introduce stochastic orders based on $\alpha$--generalized reversed aging intensity functions, i.e., $\alpha RAI$ orders, and compare these orders as $\alpha$ varies (see Section \ref{S5}).

\begin{proposition}
Let $X$ be a non--negative and absolutely continuous random variable with cdf $F$ and pdf $f$. Then the $\alpha$--generalized reversed aging intensity function is decreasing respect to $\alpha\in\mathbb R$, $\forall x\in(0,+\infty)$.
\end{proposition}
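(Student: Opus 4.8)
The plan is to fix $x\in(0,+\infty)$ and regard $\breve L_{\alpha}(x)$ as a function of $\alpha$ alone. Writing $u=F(x)\in(0,1)$ and observing that the prefactor $xf(x)>0$ does not depend on $\alpha$, it suffices to prove that, for $\alpha\neq 0$,
$$g(\alpha)=\frac{\alpha u^{\alpha-1}}{1-u^{\alpha}}=\frac{1}{u}\cdot\frac{\alpha u^{\alpha}}{1-u^{\alpha}}$$
is decreasing in $\alpha$, because $\breve L_{\alpha}(x)=xf(x)\,g(\alpha)$ and $1/u>0$ is a constant in $\alpha$. Thus the whole problem reduces to the monotonicity of $h(\alpha)=\dfrac{\alpha u^{\alpha}}{1-u^{\alpha}}$.

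First I would eliminate the dependence on $u$ through the substitution $u=e^{-t}$, with $t=-\log u>0$, so that $u^{\alpha}=e^{-\alpha t}$ and
$$h(\alpha)=\frac{\alpha e^{-\alpha t}}{1-e^{-\alpha t}}=\frac{\alpha}{e^{\alpha t}-1}=\frac{1}{t}\,\phi(\alpha t),\qquad \phi(s):=\frac{s}{e^{s}-1}.$$
Since $t>0$ is fixed, increasing $\alpha$ increases $s=\alpha t$, so the assertion that $h$ decreases in $\alpha$ is equivalent to the assertion that $\phi(s)$ decreases in $s$ on $\mathbb{R}\setminus\{0\}$.

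The core step is then to show $\phi'(s)\le 0$. A direct differentiation gives
$$\phi'(s)=\frac{e^{s}(1-s)-1}{(e^{s}-1)^{2}},$$
so it is enough to prove that the numerator $N(s)=e^{s}(1-s)-1$ is nonpositive. I would do this by noting that $N(0)=0$ and $N'(s)=-s\,e^{s}$, which is positive for $s<0$ and negative for $s>0$; hence $N$ attains its global maximum at $s=0$, giving $N(s)\le N(0)=0$ for all $s$, with strict inequality away from the origin. Therefore $\phi$ is strictly decreasing, and unwinding the substitutions shows that $g$, and thus $\breve L_{\alpha}(x)$, is decreasing in $\alpha$ for every $\alpha\neq 0$.

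Finally I would treat the value $\alpha=0$ by continuity: since $\phi(s)\to 1$ as $s\to 0$, one obtains $\lim_{\alpha\to 0}h(\alpha)=1/t=-1/\log u$, whence $\lim_{\alpha\to 0}\breve L_{\alpha}(x)=\dfrac{-xf(x)}{F(x)\log F(x)}$, which is exactly the defining value $\breve L_{0}(x)$ in \eqref{eq4}. Thus $\alpha\mapsto\breve L_{\alpha}(x)$ is continuous at $0$, and the monotonicity established on $\mathbb{R}\setminus\{0\}$ extends to all of $\mathbb{R}$. The only genuine obstacle is the monotonicity of $\phi$, and the compact observation that $N'(s)=-s\,e^{s}$ together with $N(0)=0$ settles it cleanly; everything else is bookkeeping.
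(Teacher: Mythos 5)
Your proof is correct and follows essentially the same route as the paper: factor out the positive prefactor, show the remaining function of $\alpha$ is decreasing via a derivative computation whose sign rests on an elementary inequality, and handle $\alpha=0$ by continuous extension. Your substitution $u=e^{-t}$, which reduces everything to the universal function $\phi(s)=s/(e^{s}-1)$ with key inequality $e^{s}(1-s)\le 1$, is just a cosmetic reparametrization of the paper's direct differentiation of $h_c(\alpha)=\alpha c^{\alpha}/(1-c^{\alpha})$, whose key inequality $1-t+\log t\le 0$ is the same one under $t=e^{-s}$.
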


\begin{proof}
For some $c\in(0,1)$ we consider the function $h_c(\alpha)=\frac{\alpha c^\alpha}{1-c^\alpha}$, for $\alpha\ne0$. Then
$$\frac{\mathrm dh_c(\alpha)}{\mathrm d\alpha}=\frac{c^\alpha(1-c^\alpha+\log c^\alpha)}{(1-c^\alpha)^2}.$$
That derivative is negative because $c^\alpha\in(0,+\infty)$ and the function $k(t)=1-t+\log t$ is negative for $t>0$ and different from 1. In fact, $k(1)=0$ and 1 is maximum point for this function. So $h_c$ is decreasing in $(-\infty,0)\cup(0,+\infty)$. Defining the extension for continuity in 0 of $h_c$,
$$h_c(0)=\lim_{\alpha\to 0}h_c(\alpha)=\lim_{\alpha\to 0}\frac{\alpha c^\alpha}{1-c^\alpha}=\lim_{\alpha\to 0}\frac{c^\alpha+\alpha c^\alpha\log c}{-c^\alpha\log c}=-\frac{1}{\log c},$$
it is possible to say that $h_c$ is decreasing in $\mathbb R$.

Fixing $c=F(x)$, with $x>0$, and multiplying $h_{F(x)}(\alpha)$ for the positive factor $\frac{xf(x)}{F(x)}$ we get that the following function 
$$\frac{xf(x)}{F(x)}h_{F(x)}(\alpha)=\begin{cases} \frac{\alpha xF^{\alpha-1}(x)f(x)}{1-F^{\alpha}(x)}, &\mbox{ if }\alpha\ne0 \\ \frac{-xf(x)}{F(x)\log F(x)}, &  \mbox{ if } \alpha=0
\end{cases}\mbox{  }=\breve L_{\alpha}(x)$$
is decreasing in $\alpha$ as $x$ is fixed.
\end{proof}

\section{Characterizations with use of $\alpha$--generalized reversed aging intensity}

In reliability theory some functions characterize the associated distribution function. For example it was showed in Barlow and Proschan (1996) that the hazard rate of an absolutely continuous random variable uniquely determines its distribution function.

In the following theorem we show that, for $\alpha<0$, the distribution function of a non--negative and absolutely continuous random variable is defined by the $\alpha$--generalized reversed aging intensity function and that, under some conditions, a function can be considerated as the $\alpha$--generalized reversed aging intensity function for a family of random variables.

\begin{theorem}
\label{T20}
Let $X$ be a non--negative and absolutely continuous random variable with cdf $F$ and let $\breve L_{\alpha}$ be its $\alpha$--generalized reversed aging intensity function with $\alpha<0$. Then $F$ and $\breve L_{\alpha}$ are related, for all $a\in(0,+\infty)$, by the relationship
\begin{equation}
\label{eq1}
F(x)=\left[1-(1-F^{\alpha}(a))\exp\left(-\int_a^x \frac{\breve L_{\alpha}(t)}{t}\ \mathrm dt \right)\right]^{\frac{1}{\alpha}},\ \ \  x\in(0,+\infty).
\end{equation}
Moreover, a function $\breve L$ defined on $(0,+\infty)$ and satisfying, for $a\in(0,+\infty)$, the following conditions:
\begin{itemize}
\item[(1)] $0\leq \breve L(x)<+\infty,$ for all $x \in (0,+\infty)$;
\item[(2)] $\lim_{x \to 0^+} \int_x^a \frac{\breve L(t)}{t}\ \mathrm dt = +\infty$;
\item[(3)] $\lim_{x \to +\infty} \int_a^x \frac{\breve L(t)}{t}\ \mathrm dt = +\infty$;
\end{itemize}
determines, for $\alpha<0$, a family of absolutely continuous distribution functions $F_k$ by the relationship
\begin{eqnarray}
\nonumber
F_k(x) &=& 1-W_{\alpha}\left(k\exp\left(-\int_a^x \frac{\breve L(t)}{t}\ \mathrm dt \right)\right) \\
													 &=& \left[1-k\alpha\exp\left(-\int_a^x \frac{\breve L(t)}{t}\ \mathrm dt \right)\right]^{\frac{1}{\alpha}}, \ \ \ x\in(0,+\infty), 
\end{eqnarray}
for varying the parameter $k\in(0,+\infty)$ and it is the $\alpha$--generalized reversed aging intensity function for those distribution functions.
\end{theorem}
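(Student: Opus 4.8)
The plan is to treat both halves of the statement as the two directions of a single computation, the crucial observation being that $\breve L_\alpha(x)/x$ is a logarithmic derivative. For the first part, starting from \eqref{eq4} with $\alpha\neq0$ I would write
$$\frac{\breve L_\alpha(x)}{x}=\frac{\alpha F^{\alpha-1}(x)f(x)}{1-F^\alpha(x)}=-\frac{\mathrm d}{\mathrm dx}\log\!\big(F^\alpha(x)-1\big),$$
which is legitimate for $\alpha<0$ since then $F^\alpha(x)>1$ on the support, so $F^\alpha(x)-1>0$ and the logarithm is defined. Integrating this identity from $a$ to $x$ gives $\log\big(F^\alpha(x)-1\big)-\log\big(F^\alpha(a)-1\big)=-\int_a^x \breve L_\alpha(t)/t\,\mathrm dt$; exponentiating, solving for $F^\alpha(x)$, and using $F^\alpha(a)-1=-(1-F^\alpha(a))$ yields exactly \eqref{eq1} after raising to the power $1/\alpha$.

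For the converse I would set $\phi(x)=\exp\!\big(-\int_a^x \breve L(t)/t\,\mathrm dt\big)$, so that the candidate family reads $F_k(x)=\big(1-\alpha k\,\phi(x)\big)^{1/\alpha}$, and then verify three points. First, that each $F_k$ is a genuine cdf: since $\alpha<0$ and $k,\phi>0$ one has $1-\alpha k\phi(x)>1>0$, so $F_k$ is well defined and the argument $k\phi(x)$ lies in the admissible domain $(0,+\infty)$ of $W_\alpha$; the limits $F_k(0^+)=0$ and $F_k(+\infty)=1$ then follow from conditions (2) and (3), which force $\phi(x)\to+\infty$ as $x\to0^+$ and $\phi(x)\to0$ as $x\to+\infty$. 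Second, monotonicity: a direct differentiation gives $F_k'(x)=k\,(1-\alpha k\phi(x))^{1/\alpha-1}\,\phi(x)\,\breve L(x)/x$, which is non-negative by condition (1), so every $F_k$ is non-decreasing. Third, that $\breve L$ is indeed the $\alpha$--generalized reversed aging intensity of each $F_k$: here I would compute $F_k^\alpha(x)-1=-\alpha k\,\phi(x)$ and substitute into the log-derivative identity of the first part, obtaining $\breve L_\alpha(x)/x=-\frac{\mathrm d}{\mathrm dx}\log\big(F_k^\alpha(x)-1\big)=\breve L(x)/x$, independently of $k$.

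I expect the algebra to be the cheap part; the real care lies in the sign bookkeeping forced by $\alpha<0$ (since $1-F^\alpha$ is then negative, one must pass to $F^\alpha-1$ before taking logarithms) and in confirming that conditions (1)--(3) are precisely what is needed, no more and no less, for the constructed $F_k$ to be a bona fide distribution function rather than merely a formal solution of the underlying identity. In particular I would check that the domain restriction defining $W_\alpha$ is automatically satisfied for every $k\in(0,+\infty)$ when $\alpha<0$; this is exactly what allows the whole one--parameter family to survive, in contrast to the uniqueness one expects in the case $\alpha>0$.
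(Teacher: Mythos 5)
Your proposal is correct and follows essentially the same route as the paper: integrating the identity $\breve L_\alpha(t)/t=\alpha F^{\alpha-1}f/(1-F^\alpha)$ from $a$ to $x$ for the direct part, and for the converse verifying the limits at $0^+$ and $+\infty$, monotonicity via the non-negative derivative, and the matching of the aging intensity. The only (cosmetic) differences are that you track signs through $F^\alpha-1>0$ where the paper works directly with the ratio $(1-F^\alpha(x))/(1-F^\alpha(a))$, and your closing step—differentiating $\log\bigl(F_k^\alpha(x)-1\bigr)=\log(-\alpha k)+\log\phi(x)$—makes explicit what the paper leaves implicit when it notes $k\alpha=1-F_k^{\alpha}(a)$ and appeals to the first part.
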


\begin{proof}
Fix distribution function $F$ with respective density function $f$, and put $\alpha<0$. From the definition of $\breve L_{\alpha}$ it is possible to obtain
$$\frac{\breve L_{\alpha}(t)}{t}=\frac{\alpha F^{\alpha-1}(t)f(t)}{1-F^{\alpha}(t)}, \mbox{  } t\in(0,+\infty).$$
By integrating both members between $a$ and $x$, for an arbitrary $a\in(0,+\infty)$, we get
\begin{eqnarray*}
\int_a^x \frac{\breve L_{\alpha}(t)}{t}\ \mathrm dt &=& \int_a^x \frac{\alpha F^{\alpha-1}(t)f(t)}{1-F^{\alpha}(t)}\ \mathrm dt \\
													 &=& -\log\frac{1-F^{\alpha}(x)}{1-F^{\alpha}(a)},
\end{eqnarray*}
therefore
$$1-F^{\alpha}(x)=(1-F^{\alpha}(a))\exp\left(-\int_a^x \frac{\breve L_{\alpha}(t)}{t}\ \mathrm dt\right),$$
and so we get \eqref{eq1}.

Let $\breve L$ be a function defined on $(0,+\infty)$ and satisfying, for $a\in(0,+\infty)$, the conditions $\textit{(1)}$, $\textit{(2)}$, $\textit{(3)}$.
We show that $1- W_{\alpha}\left(k\exp\left(-\int_a^x \frac{\breve L(t)}{t}\ \mathrm dt \right)\right)= F_k(x)$ defines a distribution function of a non--negative and absolutely continuous random variable, with $k\in(0,+\infty)$.

In fact, from $\textit{(2)}$ it follows that $\lim_{x \to 0^+} \int_a^x \frac{\breve L(t)}{t}\ \mathrm dt = -\infty$ and so we conclude that $\lim_{x \to 0^+}  F_k(x)=0$, while from $\textit{(3)}$ we have $\lim_{x \to +\infty} F_k(x)=1$. 

Since $W_\alpha$ is increasing, $k>0$ and the exponential function is increasing, in order to show that it is an increasing function we have to prove that $-\int_a^x \frac{\breve L(t)}{t}\ \mathrm dt$ is a decreasing function in $x$ i.e., $\int_a^x \frac{\breve L(t)}{t}\ \mathrm dt$ is increasing in $x$. From $\textit{(1)}$ and from the assumptions about the interval in which we have $a$ and $x$ it follows that the integrand is non negative. Let $x_1, x_2$ be such that $0<x_1<x_2<+\infty$. If $x_1\ge a$ then we have two non negative quantities and $\int_a^{x_1} \frac{\breve L(t)}{t}\ \mathrm dt\leq\int_a^{x_2} \frac{\breve L(t)}{t}\ \mathrm dt$ because $(a,x_1)\subset(a,x_2)$. If $x_1\leq a < x_2$, then $\int_a^{x_1} \frac{\breve L(t)}{t}\ \mathrm dt\leq0\leq\int_a^{x_2} \frac{\breve L(t)}{t}\ \mathrm dt$. If, finally, $x_1 < x_2\leq a$, we have two non positive quantities and, for a reasoning similar to the first case, $\int_{x_1}^a \frac{\breve L(t)}{t}\ \mathrm dt\ge\int_{x_2}^a \frac{\breve L(t)}{t}\ \mathrm dt$ and so $\int_a^{x_1} \frac{\breve L(t)}{t}\ \mathrm dt\leq\int_a^{x_2} \frac{\breve L(t)}{t}\ \mathrm dt$.

Since $W_\alpha$, the exponential function, the multiplication for a scalar and the indefinite integral $x\mapsto \int_a^x \frac{\breve L(t)}{t}\ \mathrm dt$ with respect to the Lebesgue measure are continuous functions, we have a continuous function. In order to obtain the absolute continuity of $F_k$, it suffices to observe that the derivative
\begin{equation*}
F'_k(x)=\left[1-k\alpha\exp\left(-\int_a^x \frac{\breve L(t)}{t} \mathrm dt\right)\right]^{\frac{1}{\alpha}-1} (-k)\exp\left(-\int_a^x \frac{\breve L(t)}{t} \mathrm dt\right)\left(-\frac{\breve L(x)}{x} \right)
\end{equation*}
is non--negative in $x>0$.

To show that $\breve L$ is $\alpha$--generalized reversed aging intensity function related to those distribution functions we have to observe that $F_k(a)=[1-k\alpha]^{\frac{1}{\alpha}}$, and so $k\alpha=1-F_k^\alpha(a)$ i.e., $F_k$ and $\breve L$ are related by the relationship expressed in the first part of the theorem.
\end{proof}

\begin{remark}
The expression $W_{\alpha}\left(k\exp\left(-\int_a^x \frac{\breve L(t)}{t}\ \mathrm dt \right)\right)$ depends only on the parameter $k\in(0,+\infty)$ because the dependence from $a\in(0,+\infty)$ is fictitious. In fact, replacing $a$ with $b\in(0,+\infty)$ we get
\begin{align*}
&W_{\alpha}\left(k\exp\left(-\int_b^x \frac{\breve L(t)}{t}\ \mathrm dt \right)\right)\\
&=W_{\alpha}\left(k\exp\left(-\int_b^a \frac{\breve L(t)}{t}\ \mathrm dt \right)\exp\left(-\int_a^x \frac{\breve L(t)}{t}\ \mathrm dt \right)\right)\\
&=W_{\alpha}\left(k_1\exp\left(-\int_a^x \frac{\breve L(t)}{t}\ \mathrm dt \right)\right),
\end{align*}
where $k_1$ is such that $k_1=k\exp\left(-\int_b^a \frac{\breve L(t)}{t}\ \mathrm dt \right)>0$.
\end{remark}

\begin{remark}
\label{ossnuova1}
If $\breve L$ is the $\alpha$--generalized reversed aging intensity function, with $\alpha<0$, of a non--negative and absolutely continuous random variable $X$, it satisfies conditions $\textit{(1)}$, $\textit{(2)}$, $\textit{(3)}$ of Theorem~\ref{T20}. In fact, from \eqref{eq4} we observe that $\breve L$ is non--negative for $x\in(0,+\infty)$. Moreover,
\begin{eqnarray*}
\lim_{x \to 0^+} \int_x^a \frac{\breve L(t)}{t}\ \mathrm dt &=& \lim_{x \to 0^+} \int_x^a \frac{\alpha F^{\alpha-1}(t)f(t)}{1-F^{\alpha}(t)}\ \mathrm dt \\
&=& \lim_{x \to 0^+} -\log\frac{1-F^{\alpha}(a)}{1-F^{\alpha}(x)}= +\infty,
\end{eqnarray*}
\begin{eqnarray*}
\lim_{x \to +\infty} \int_a^x \frac{\breve L(t)}{t}\ \mathrm dt &=& \lim_{x \to +\infty} \int_a^x \frac{\alpha F^{\alpha-1}(t)f(t)}{1-F^{\alpha}(t)}\ \mathrm dt \\
&=& \lim_{x \to +\infty} -\log\frac{1-F^{\alpha}(x)}{1-F^{\alpha}(a)}= +\infty.
\end{eqnarray*}
\end{remark}

\begin{remark}
If $\breve L$ is a function that satisfies conditions $\textit{(1)}$, $\textit{(2)}$, $\textit{(3)}$ of Theorem~\ref{T20} then it determines, for $\alpha=0$, a family of absolutely continuous distribution functions $F_k$ by the relationship
\begin{eqnarray}
\nonumber
F_k(x) &=& 1-W_{0}\left(k\exp\left(-\int_a^x \frac{\breve L(t)}{t}\ \mathrm dt \right)\right) \\
													 &=& \exp\left[-k\, \exp\left(-\int\limits_{a}\limits^{x} \frac{\breve{L}(t)}{t} \mathrm{d}t\right)\right], \ \ \ \ x\in(0, +\infty), 
\end{eqnarray}
for varying the parameter $k\in(0,+\infty)$ and it is the $0$--generalized reversed aging intensity function (i.e., the reversed aging intensity function) for those distribution functions. This follows from corollary 4 of Szymkowiak (2018a) noting that $L_{X}\left(\frac{1}{x}\right)=\breve L_{\frac{1}{X}}(x)$.
\end{remark}

In the following theorem we show that, for $\alpha>0$, the distribution function of a non--negative and absolutely continuous random variable is defined by the $\alpha$--generalized reversed aging intensity function and that, under some conditions, a function can be considerated as the $\alpha$--generalized reversed aging intensity function for a unique random variable.

\begin{theorem}
\label{T21}
Let $X$ be a non--negative and absolutely continuous random variable with cdf $F$ and let $\breve L_{\alpha}$ be its $\alpha$--generalized reversed aging intensity function with $\alpha>0$. Then $F$ and $\breve L_{\alpha}$ are related, for all $a\in(0,+\infty)$, by the relationship
\begin{equation}
\label{eq2}
F(x)=\left[1-\exp\left(-\int_0^{x} \frac{\breve L_{\alpha}(t)}{t}\ \mathrm dt \right)\right]^{\frac{1}{\alpha}},\ \ \ x\in(0,+\infty).
\end{equation}
Moreover, a function $\breve L$ defined on $(0,+\infty)$ and satisfying, for $a\in(0,+\infty)$, the following conditions:
\begin{itemize}
\item[(1)] $0\leq \breve L(x)+\infty,$ for all $x \in (0,+\infty)$;
\item[(2)] $\lim_{x \to 0^+} \int_x^a \frac{\breve L(t)}{t}\ \mathrm dt < +\infty$;
\item[(3)] $\lim_{x \to +\infty} \int_a^x \frac{\breve L(t)}{t}\ \mathrm dt = +\infty$;
\end{itemize}
determines, for $\alpha>0$, a unique absolutely continuous distribution function $F$ by the relationship
\begin{eqnarray}
\nonumber
F(x) &=& 1-W_{\alpha}\left(\frac{1}{\alpha}\exp\left(-\int_0^{x} \frac{\breve L(t)}{t}\ \mathrm dt \right)\right) \\
													 &=& \left[1-\exp\left(-\int_0^{x} \frac{\breve L(t)}{t}\ \mathrm dt \right)\right]^{\frac{1}{\alpha}}, \ \ \ x\in(0,+\infty),
\end{eqnarray}
 and it is $\alpha$--generalized reversed aging intensity function for that distribution function.
\end{theorem}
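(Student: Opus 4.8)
The plan is to follow the two-step template of the proof of Theorem~\ref{T20}, again exploiting that $\breve L_\alpha(t)/t$ is a logarithmic derivative, but now using the lower endpoint $0$ directly because $\alpha>0$. For the first assertion I would start from \eqref{eq4} with $\alpha\neq0$, rewriting
$$\frac{\breve L_\alpha(t)}{t}=\frac{\alpha F^{\alpha-1}(t)f(t)}{1-F^{\alpha}(t)}=-\frac{\mathrm d}{\mathrm dt}\log\bigl(1-F^{\alpha}(t)\bigr).$$
Integrating from $0$ to $x$ and noting that, since $\alpha>0$ and $\lim_{t\to0^+}F(t)=0$, the boundary term $\log(1-F^{\alpha}(t))$ vanishes at $0$, I obtain $\int_0^x \breve L_\alpha(t)/t\,\mathrm dt=-\log(1-F^\alpha(x))$, hence $1-F^\alpha(x)=\exp\bigl(-\int_0^x \breve L_\alpha(t)/t\,\mathrm dt\bigr)$, which rearranges to \eqref{eq2}. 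The key structural difference from the $\alpha<0$ case is that here the integral from $0$ converges, so no arbitrary base point $a$ and no undetermined constant appear.

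For the converse, I would set $F(x)=[1-\exp(-\int_0^x \breve L(t)/t\,\mathrm dt)]^{1/\alpha}$ and verify it is an absolutely continuous cdf. Condition (2), namely $\lim_{x\to0^+}\int_x^a \breve L(t)/t\,\mathrm dt<+\infty$, says that $\int_0^a \breve L(t)/t\,\mathrm dt$ converges, so $\int_0^x \breve L(t)/t\,\mathrm dt\to0$ as $x\to0^+$ and therefore $\lim_{x\to0^+}F(x)=0$; condition (3) gives $\int_0^x \breve L(t)/t\,\mathrm dt\to+\infty$, whence $\lim_{x\to+\infty}F(x)=1$. Monotonicity follows since by condition (1) the integrand is non-negative, so $\int_0^x \breve L(t)/t\,\mathrm dt$ increases, $\exp(-\int_0^x \breve L(t)/t\,\mathrm dt)$ decreases, and raising the increasing quantity $1-\exp(-\int_0^x \breve L(t)/t\,\mathrm dt)$ to the positive power $1/\alpha$ preserves monotonicity. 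Continuity is immediate, and absolute continuity follows once I check that
$$F'(x)=\frac{1}{\alpha}\Bigl[1-\exp\bigl(-\int_0^x \tfrac{\breve L(t)}{t}\,\mathrm dt\bigr)\Bigr]^{\frac1\alpha-1}\exp\bigl(-\int_0^x \tfrac{\breve L(t)}{t}\,\mathrm dt\bigr)\frac{\breve L(x)}{x}$$
is non-negative on $(0,+\infty)$.

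To finish, I would confirm that $\breve L$ is the $\alpha$-generalized reversed aging intensity of this $F$, and that $F$ is unique. Differentiating $F^\alpha(x)=1-\exp(-\int_0^x \breve L(t)/t\,\mathrm dt)$ gives $\alpha F^{\alpha-1}(x)f(x)=\exp\bigl(-\int_0^x \breve L(t)/t\,\mathrm dt\bigr)\breve L(x)/x$, while $1-F^\alpha(x)=\exp\bigl(-\int_0^x \breve L(t)/t\,\mathrm dt\bigr)$; substituting both into \eqref{eq4} and cancelling yields $\breve L_\alpha(x)=\breve L(x)$. Uniqueness is where the $\alpha>0$ case departs from Theorem~\ref{T20}: any cdf with the prescribed $\breve L$ must satisfy \eqref{eq2} by the first part, and \eqref{eq2} carries no free parameter, the integration constant having been fixed by $F(0^+)=0$ precisely because the integral converges at $0$. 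Hence the distribution is determined uniquely rather than only up to a family parameter $k$.

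The main obstacle I anticipate is the careful handling of the lower endpoint $0$: I must use the convergence granted by condition (2) both to make $\int_0^x \breve L(t)/t\,\mathrm dt$ well-defined and to annihilate the boundary term, and then argue cleanly that this same convergence removes the undetermined constant $k$ of the $\alpha<0$ setting, thereby upgrading ``family of distributions'' to ``unique distribution''.
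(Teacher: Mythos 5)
Your proposal is correct and follows essentially the same route as the paper's proof: you integrate the logarithmic derivative $-\frac{\mathrm d}{\mathrm dt}\log\bigl(1-F^{\alpha}(t)\bigr)$ from $0$ to $x$ to obtain \eqref{eq2}, and then verify that the constructed $F$ is an absolutely continuous cdf using conditions \textit{(1)}--\textit{(3)} exactly as the paper does. You are in fact slightly more explicit than the paper on two points it glosses over, namely the vanishing of the boundary term at $0$ (which is what removes the free constant $k$ of the $\alpha<0$ case) and the cancellation argument showing $\breve L_{\alpha}=\breve L$ together with uniqueness.
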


\begin{proof}
Fix distribution function $F$ with respective density function $f$, and put $\alpha>0$. From the definition of $\breve L_{\alpha}$ it is possible to obtain
$$\frac{\breve L_{\alpha}(t)}{t}=\frac{\alpha F^{\alpha-1}(t)f(t)}{1-F^{\alpha}(t)}, \mbox{  } t\in(0,+\infty).$$
By integrating both members between $0$ and $x$, we get
\begin{eqnarray*}
\int_0^{x} \frac{\breve L_{\alpha}(t)}{t}\ \mathrm dt &=& \int_0^{x} \frac{\alpha F^{\alpha-1}(t)f(t)}{1-F^{\alpha}(t)}\ \mathrm dt \\
													 &=& -\log(1-F^{\alpha}(x)),
\end{eqnarray*}
therefore
$$1-F^{\alpha}(x)=\exp\left(-\int_0^{x} \frac{\breve L_{\alpha}(t)}{t}\ \mathrm dt \right),$$
and so we get \eqref{eq2}.

Let $\breve L$ be a function defined on $(0,+\infty)$ and satisfying, for $a\in(0,+\infty)$, the conditions $\textit{(1)}$, $\textit{(2)}$, $\textit{(3)}$.
We show that $1-W_{\alpha}\left(\frac{1}{\alpha}\exp\left(-\int_0^{x} \frac{\breve L(t)}{t}\ \mathrm dt \right)\right)= F(x)$ defines a distribution function of a non--negative and absolutely continuous random variable.

In fact, from $\textit{(2)}$ it follows that $\lim_{x \to 0^+}  F(x)=0$, whereas from $\textit{(3)}$ we obtain that $\lim_{x \to +\infty} F(x)=1.$

Since $W_\alpha$ is increasing, $\alpha>0$ and the exponential function is increasing,  in order to show that it is an increasing function we have to prove that $-\int_0^{x} \frac{\breve L(t)}{t}\ \mathrm dt$ is a decreasing function in $x$ i.e., $\int_0^{x} \frac{\breve L(t)}{t}\ \mathrm dt$ is increasing in $x$, but this is immediate because the integrand is non negative and as $x$ increases, the integration interval widens.

Since $W_\alpha$, the exponential function, the multiplication for a scalar and the indefinite integral $x\mapsto\int_0^{x} \frac{\breve L(t)}{t}\ \mathrm dt$ are continuous functions, we have a continuous function. In order to obtain the absolute continuity of $F$, it suffices to observe that the derivative
\begin{equation*}
F'(x)=-\frac{1}{\alpha}\left[1-\exp\left(-\int_0^x \frac{\breve L(t)}{t} \mathrm dt\right)\right]^{\frac{1}{\alpha}-1} \exp\left(-\int_0^x \frac{\breve L(t)}{t} \mathrm dt\right)\left(-\frac{\breve L(x)}{x} \right)
\end{equation*}
is non--negative in $x>0$.
Finally, $F$ and $\breve L$ are related by the same relationship found in the first part of the theorem and so $\breve L$ is $\alpha$--generalized reversed aging intensity function for that distribution function.
\end{proof}

\begin{remark}
\label{ossnuova2}
If $\breve L$ is the $\alpha$--generalized reversed aging intensity function, with $\alpha>0$, of a non--negative and absolutely continuous random variable $X$, it satisfies conditions $\textit{(1)}$, $\textit{(2)}$, $\textit{(3)}$ of Theorem~\ref{T21}. In fact, from \eqref{eq4} we observe that $\breve L$ is non--negative for $x\in(0,+\infty)$. Moreover,
\begin{eqnarray*}
\lim_{x \to 0^+} \int_x^a \frac{\breve L(t)}{t}\ \mathrm dt &=& \lim_{x \to 0^+} \int_x^a \frac{\alpha F^{\alpha-1}(t)f(t)}{1-F^{\alpha}(t)}\ \mathrm dt \\
&=& \lim_{x \to 0^+} -\log\frac{1-F^{\alpha}(a)}{1-F^{\alpha}(x)}< +\infty,
\end{eqnarray*}
\begin{eqnarray*}
\lim_{x \to +\infty} \int_a^x \frac{\breve L(t)}{t}\ \mathrm dt &=& \lim_{x \to +\infty} \int_a^x \frac{\alpha F^{\alpha-1}(t)f(t)}{1-F^{\alpha}(t)}\ \mathrm dt \\
&=& \lim_{x \to +\infty} -\log\frac{1-F^{\alpha}(x)}{1-F^{\alpha}(a)}= +\infty.
\end{eqnarray*}
\end{remark}

In a concrete situation, if we have data it is possible to obtain an estimation of both distribution function and $\alpha$--generalized reversed aging intensity functions. So it could happen that the shape of an $\alpha$--generalized reversed aging intensity function is easier to recognize than that of the distribution function.

\section{Examples of characterization}

\begin{definition} 
We say that a random variable $X$ follows an inverse two-parameter Weibull distribution (see Murthy, Xie and Jiang 2004) if for $x\in(0,+\infty)$ and $\beta,\lambda>0$ the distribution function is expressed as
\begin{equation}
\label{eq5}
F(x)=\exp\left(-\frac{\lambda}{x^\beta}\right).
\end{equation}
In that case we write $X\sim invW2(\beta,\lambda)$.
\end{definition}

From the cdf \eqref{eq5} it is possible to obtain other characteristics of the distribution. In particular, for $x\in(0,+\infty)$, the pdf is
$$f(x)=\frac{\lambda\beta}{x^{\beta+1}}\exp\left(-\frac{\lambda}{x^\beta}\right),$$
the reversed hazard rate function is
$$\breve r(x)=\frac{f(x)}{F(x)}=\frac{\lambda\beta}{x^{\beta+1}},$$
and the $\alpha$--generalized reversed aging intensity function, for $\alpha\ne0$, is
\begin{eqnarray}
\nonumber
\breve L_\alpha(x) &=& \frac{\alpha x\frac{\lambda\beta}{x^{\beta+1}}\exp\left(-\frac{\lambda}{x^\beta}\right)\exp\left(-\frac{\lambda(\alpha-1)}{x^\beta}\right)}{1-\exp\left(-\frac{\lambda\alpha}{x^\beta}\right)} \\
\label{eq6}
													 &=& \frac{\alpha\beta\lambda}{x^\beta}\frac{\exp\left(-\frac{\lambda\alpha}{x^\beta}\right)}{1-\exp\left(-\frac{\lambda\alpha}{x^\beta}\right)}.
\end{eqnarray}

Let $\alpha<0$. By remark~\ref{ossnuova1} we know that \eqref{eq6} satisfies the hypothesis $\textit{(1)}, \textit{(2)},\textit{(3)}$ of Theorem~\ref{T20}. So we can apply the theorem by determining the quantity
\begin{eqnarray*}
F_k(x) &=& \left[1-k\alpha\exp\left(-\int_a^x \frac{\alpha\beta\lambda}{t^{\beta+1}}\frac{\exp\left(-\frac{\lambda\alpha}{t^\beta}\right)}{1-\exp\left(-\frac{\lambda\alpha}{t^\beta}\right)}\ \mathrm dt \right)\right]^{\frac{1}{\alpha}} \\
													 &=& \left[1-k\alpha\frac{\exp\left(-\frac{\lambda\alpha}{x^\beta}\right)-1}{\exp\left(-\frac{\lambda\alpha}{a^\beta}\right)-1}\right]^{\frac{1}{\alpha}}.
\end{eqnarray*}
\begin{corollary}
\label{cor1}
If a random variable $X$ has $\alpha$--generalized reversed aging intensity function, $\alpha<0$, $\breve L_{\alpha}(x)=\frac{\alpha\beta\lambda}{x^\beta}\frac{\exp\left(-\frac{\lambda\alpha}{x^\beta}\right)}{1-\exp\left(-\frac{\lambda\alpha}{x^\beta}\right)}$, a.e. $x\in(0,+\infty)$, with $\beta,\lambda>0$, then the distribution function of $X$ is expressed as
\begin{equation}
F(x)=\left[1-\gamma\alpha\left(\exp\left(-\frac{\lambda\alpha}{x^\beta}\right)-1\right)\right]^{\frac{1}{\alpha}},\mbox{  } x\in(0,+\infty)
\end{equation}
for $\gamma\in(0,+\infty)$.
\end{corollary}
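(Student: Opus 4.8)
The plan is to apply Theorem~\ref{T20} directly, since here $\alpha<0$ and the given $\breve L_\alpha$ is precisely the $\alpha$--generalized reversed aging intensity function of the inverse Weibull $invW2(\beta,\lambda)$ computed in \eqref{eq6}. By Remark~\ref{ossnuova1}, any function arising this way automatically satisfies hypotheses $\textit{(1)}$, $\textit{(2)}$, $\textit{(3)}$, so the theorem applies and yields the family
$$F_k(x)=\left[1-k\alpha\exp\left(-\int_a^x \frac{\breve L_\alpha(t)}{t}\ \mathrm dt\right)\right]^{\frac{1}{\alpha}},\qquad k\in(0,+\infty).$$
The task therefore reduces to evaluating the integral and recasting the answer in the single-parameter form claimed.

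For the integral I would recognize the integrand as a logarithmic derivative. Writing $v(t)=1-\exp\left(-\frac{\lambda\alpha}{t^\beta}\right)$ and differentiating gives $v'(t)=-\frac{\lambda\alpha\beta}{t^{\beta+1}}\exp\left(-\frac{\lambda\alpha}{t^\beta}\right)$, so that $\frac{\breve L_\alpha(t)}{t}=-\frac{v'(t)}{v(t)}$. Hence
$$\int_a^x \frac{\breve L_\alpha(t)}{t}\ \mathrm dt=-\log\frac{v(x)}{v(a)},\qquad \exp\left(-\int_a^x \frac{\breve L_\alpha(t)}{t}\ \mathrm dt\right)=\frac{\exp\left(-\frac{\lambda\alpha}{x^\beta}\right)-1}{\exp\left(-\frac{\lambda\alpha}{a^\beta}\right)-1},$$
which reproduces the expression for $F_k$ displayed just before the corollary.

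Finally I would remove the fictitious dependence on $a$ by reparametrizing. Setting $\gamma=\dfrac{k}{\exp\left(-\frac{\lambda\alpha}{a^\beta}\right)-1}$ turns $F_k$ into the stated $F(x)=\left[1-\gamma\alpha\left(\exp\left(-\frac{\lambda\alpha}{x^\beta}\right)-1\right)\right]^{1/\alpha}$. It then remains to check that $\gamma$ sweeps out all of $(0,+\infty)$: since $\alpha<0$ and $\lambda,\beta>0$ we have $-\frac{\lambda\alpha}{a^\beta}>0$, whence $\exp\left(-\frac{\lambda\alpha}{a^\beta}\right)-1>0$, so $\gamma>0$ and, for fixed $a$, ranges over all of $(0,+\infty)$ as $k$ does. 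The step requiring the most care is this reparametrization and its sign bookkeeping, together with confirming (in the spirit of the remark on fictitious dependence) that the resulting family is genuinely independent of the choice of $a$; the remaining manipulations are routine.
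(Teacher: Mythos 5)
Your proposal is correct and follows essentially the same route as the paper: invoke Remark~\ref{ossnuova1} to verify the hypotheses of Theorem~\ref{T20}, evaluate the integral $\int_a^x \frac{\breve L_\alpha(t)}{t}\,\mathrm dt$ in closed form (the logarithmic-derivative observation is exactly the computation the paper performs), and absorb the $a$-dependent constant into the single parameter $\gamma=k\big/\left(\exp\left(-\frac{\lambda\alpha}{a^\beta}\right)-1\right)$. Your explicit sign check that $\gamma$ ranges over all of $(0,+\infty)$ is a detail the paper leaves implicit, but it is the same argument.
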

\begin{remark}
If $\gamma=-\frac{1}{\alpha}$, the distribution function of Corollary \ref{cor1} is the distribution function of an inverse two-parameter Weibull distribution, $invW2(\beta,\lambda)$.
\end{remark}

Let $\alpha>0$. By remark~\ref{ossnuova2} we know that \eqref{eq6} satisfies the hypothesis $\textit{(1)}, \textit{(2)},\textit{(3)}$ of Theorem~\ref{T21}. So we can apply the theorem by determining the quantity
\begin{eqnarray*}
F(x) &=& \left[1-\exp\left(-\int_0^x \frac{\alpha\beta\lambda}{t^{\beta+1}}\frac{\exp\left(-\frac{\lambda\alpha}{t^\beta}\right)}{1-\exp\left(-\frac{\lambda\alpha}{t^\beta}\right)}\ \mathrm dt \right)\right]^{\frac{1}{\alpha}} \\
													 &=& \left[1-\left(1-\exp\left(-\frac{\lambda\alpha}{x^\beta}\right)\right)\right]^{\frac{1}{\alpha}}=\exp\left(-\frac{\lambda}{x^\beta}\right).
\end{eqnarray*}
\begin{corollary}
If a random variable $X$ has $\alpha$--generalized reversed aging intensity function, $\alpha>0$, $\breve L_{\alpha}(x)=\frac{\alpha\beta\lambda}{x^\beta}\frac{\exp\left(-\frac{\lambda\alpha}{x^\beta}\right)}{1-\exp\left(-\frac{\lambda\alpha}{x^\beta}\right)}$, a.e. $x\in(0,+\infty)$, with $\beta,\lambda>0$, then $X$ follows an inverse two-parameter Weibull distribution, $X\sim invW2(\beta,\lambda)$.
\end{corollary}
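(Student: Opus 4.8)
The plan is to obtain the cdf directly from the characterization in Theorem~\ref{T21} and reduce the whole argument to one definite integral. Since the prescribed $\breve L_\alpha$ is exactly the function \eqref{eq6} associated with the inverse two-parameter Weibull law, Remark~\ref{ossnuova2} already certifies that it meets conditions (1), (2), (3) of Theorem~\ref{T21} for $\alpha>0$. Hence the theorem applies and yields a \emph{unique} absolutely continuous distribution function $F$ via $F(x)=\bigl[1-\exp(-\int_0^x \frac{\breve L_\alpha(t)}{t}\,\mathrm dt)\bigr]^{1/\alpha}$; note that this formula only sees $\breve L_\alpha$ through an integral, so the ``a.e.'' qualification in the hypothesis is immaterial. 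Everything therefore comes down to evaluating that integral.

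First I would compute $\int_0^x \frac{\breve L_\alpha(t)}{t}\,\mathrm dt$ from $\frac{\breve L_\alpha(t)}{t}=\frac{\alpha\beta\lambda}{t^{\beta+1}}\,\frac{\exp(-\lambda\alpha/t^\beta)}{1-\exp(-\lambda\alpha/t^\beta)}$. The natural substitution is $u=\exp(-\lambda\alpha/t^\beta)$, giving $\mathrm du=\frac{\alpha\beta\lambda}{t^{\beta+1}}\exp(-\lambda\alpha/t^\beta)\,\mathrm dt$, so that the integrand times $\mathrm dt$ collapses to $\frac{\mathrm du}{1-u}$, whose antiderivative is $-\log(1-u)=-\log\bigl(1-\exp(-\lambda\alpha/t^\beta)\bigr)$. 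One can confirm this by differentiating $-\log\bigl(1-\exp(-\lambda\alpha/t^\beta)\bigr)$ and recovering $\frac{\breve L_\alpha(t)}{t}$ exactly.

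Next I would insert the limits. As $t\to 0^+$ we have $\lambda\alpha/t^\beta\to+\infty$ (here $\alpha,\beta,\lambda>0$), so $\exp(-\lambda\alpha/t^\beta)\to 0$ and the lower boundary term $-\log\bigl(1-\exp(-\lambda\alpha/t^\beta)\bigr)\to 0$; in particular the integral is finite at the origin, consistent with condition (2). Thus $\int_0^x \frac{\breve L_\alpha(t)}{t}\,\mathrm dt=-\log\bigl(1-\exp(-\lambda\alpha/x^\beta)\bigr)$, and exponentiating gives $\exp\bigl(-\int_0^x \frac{\breve L_\alpha(t)}{t}\,\mathrm dt\bigr)=1-\exp(-\lambda\alpha/x^\beta)$.

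Finally, substituting into the characterization formula produces $F(x)=\bigl[1-(1-\exp(-\lambda\alpha/x^\beta))\bigr]^{1/\alpha}=\bigl(\exp(-\lambda\alpha/x^\beta)\bigr)^{1/\alpha}=\exp(-\lambda/x^\beta)$, which is precisely the cdf \eqref{eq5} of $invW2(\beta,\lambda)$. Because Theorem~\ref{T21} guarantees uniqueness, this is the only distribution carrying the prescribed $\breve L_\alpha$, so $X\sim invW2(\beta,\lambda)$ and the claim follows. The only delicate point is the integral itself: I expect the main (mild) obstacle to be confirming that the lower boundary term vanishes as $t\to 0^+$ and keeping the signs straight through the substitution, after which the result is an immediate consequence of Theorem~\ref{T21}.
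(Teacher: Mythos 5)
Your proposal is correct and follows essentially the same route as the paper: both invoke Remark~\ref{ossnuova2} to verify conditions (1)--(3) of Theorem~\ref{T21}, apply that theorem, evaluate $\int_0^x \frac{\breve L_\alpha(t)}{t}\,\mathrm dt = -\log\bigl(1-\exp(-\lambda\alpha/x^\beta)\bigr)$, and conclude $F(x)=\exp(-\lambda/x^\beta)$ by uniqueness. The only difference is that you spell out the substitution $u=\exp(-\lambda\alpha/t^\beta)$ and the vanishing of the boundary term at $t\to 0^+$, which the paper leaves implicit.
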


Let us consider some examples of polynomial $\alpha$--generalized reversed aging intensity functions.
\begin{example}
\label{example1}
Let us consider $\breve{L}_{\alpha}(x)=A>0$, for $x>0$. It could be a constant $\alpha$--generalized reversed aging intensity function for $\alpha\leq0$, in fact for $\alpha>0$ it does not satisfy the hypothesis of Theorem~\ref{T21}.

For $\alpha=0$ it determines a family of inverse two-parameter Weibull distributions by the relationship (see Szymkowiak (2018a))
\begin{equation}
F_k(x)=\exp\left[-k\left(\frac{1}{x}\right)^A\right],\ \ \ \ x\in(0, +\infty),
\end{equation}
where $k$ is a non--negative parameter.

For $\alpha<0$, it determines a family of continuous distributions by the relationship
\begin{equation}
F_k(x)=\left[1+k\left(\frac{1}{x}\right)^A\right]^{\frac{1}{\alpha}},\ \ \ \ x\in(0, +\infty)
\end{equation}
where $k$ is a non--negative parameter. 
\end{example}

\begin{example}
\label{example2}
Let us consider $\breve{L}_{\alpha}(x)=A+Bx$, for $x>0$ where $A,B>0$. It could be a linear $\alpha$--generalized reversed aging intensity function for $\alpha\leq0$, in fact for $\alpha>0$ it does not satisfy the hypothesis of Theorem~\ref{T21}.

For $\alpha=0$, it determines a family of continuous distributions by the relationship
\begin{equation}
F_k(x)=\exp\left[-k\left(\frac{1}{x}\right)^A\exp(-B\,x)\right],\ \ \ \ x\in(0, +\infty)
\end{equation}
where $k$ is a non--negative parameter.

For $\alpha<0$, it determines a family of continuous distributions by the relationship
\begin{equation}
F_k(x)=\left[1+k\left(\frac{1}{x}\right)^A\exp(-B\, x)\right]^{\frac{1}{\alpha}},\ \ \ \ x\in(0, +\infty)
\end{equation}
where $k$ is a non--negative parameter.
\end{example}

\begin{example}
Let us consider $\breve{L}_{\alpha}(x)=Bx$, for $x>0$, where $B>0$. It could be a linear $\alpha$--generalized reversed aging intensity function for $\alpha>0$, in fact for $\alpha>0$ it satisfies the hypothesis of Theorem~\ref{T21}. It determines a unique continuous distribution function by the relationship
\begin{equation}
F(x)=\left[1-\exp(-Bx)\right]^{\frac{1}{\alpha}},\ \ \ \ x\in(0, +\infty),
\end{equation}
i.e., an exponentiated exponential distribution (see Gupta and Kundu, 2001). We note that for $\alpha=1$ this is the distribution function of an exponential random variable with parameter $B$. So if $X$ has $1$--generalized reversed aging intensity function $\breve{L}_{1}(x)=Bx$, for $x>0$ and $B>0$ then $X\sim Exp(B)$.
\end{example}

\section{$\alpha$--generalized reversed aging intensity orders}\label{S5}
In this section we introduce and study the family of the $\alpha$--generalized reversed aging intensity orders. In the following, we use the notation $L_{\alpha,X}$ to indicate the $\alpha$--generalized aging intensity function of the random variable $X$ and $\breve L_{\alpha,X}$ to indicate the $\alpha$--generalized reversed aging intensity function of the random variable $X$.

In the next proposition we show a useful relationship between $L_{\alpha,X}$ and $\breve L_{\alpha,\frac{1}{X}}$.

\begin{proposition}
\label{pr}
Let $X$ be a non--negative and absolutely continuous random variable and let $\frac{1}{X}$ be its inverse. Then the following equality holds
\begin{equation}
L_{\alpha,X}\left(\frac{1}{x}\right)=\breve L_{\alpha,\frac{1}{X}}(x), \mbox{  } x\in(0,+\infty).
\end{equation}
\end{proposition}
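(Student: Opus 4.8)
The plan is to reduce the identity to a direct substitution by first expressing the distribution of $1/X$ in terms of $F$ and $f$, and then plugging those into the definition of $\breve L_{\alpha}$. Everything then collapses onto the definition of $L_{\alpha,X}$ evaluated at $1/x$, so no deep idea is needed beyond bookkeeping the change of variables correctly.

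First I would compute the cdf and pdf of $1/X$. Since $X$ is non--negative and absolutely continuous, for $y>0$ we have $\mathbb P(1/X\leq y)=\mathbb P(X\geq 1/y)=1-F(1/y)$. Denoting by $\tilde F$ and $\tilde f$ the cdf and pdf of $1/X$, this gives $\tilde F(y)=1-F(1/y)=\overline{F}(1/y)$ and, by differentiation, $\tilde f(y)=\frac{1}{y^{2}}f(1/y)$. The structural point to highlight is the identity $\tilde F(x)=\overline{F}(1/x)$: the cdf of $1/X$ at $x$ is exactly the survival function of $X$ at $1/x$. Consequently the powers $\tilde F^{\alpha}(x)=(1-F(1/x))^{\alpha}$ and $\tilde F^{\alpha-1}(x)=(1-F(1/x))^{\alpha-1}$ that enter $\breve L_{\alpha,\frac{1}{X}}(x)$ are precisely the survival--function powers appearing in $L_{\alpha,X}(1/x)$.

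Next I would substitute these into the definition of the reversed aging intensity. For $\alpha\neq 0$,
\[
\breve L_{\alpha,\frac{1}{X}}(x)=\frac{\alpha x\,\tilde F^{\alpha-1}(x)\,\tilde f(x)}{1-\tilde F^{\alpha}(x)}=\frac{\alpha x\,(1-F(1/x))^{\alpha-1}\,\frac{1}{x^{2}}f(1/x)}{1-(1-F(1/x))^{\alpha}}.
\]
Here the explicit weight $x$ combines with the Jacobian factor $\frac{1}{x^{2}}$ to produce $\frac{1}{x}$, and comparing with
\[
L_{\alpha,X}(1/x)=\frac{\alpha\,(1/x)\,(1-F(1/x))^{\alpha-1}f(1/x)}{1-(1-F(1/x))^{\alpha}}
\]
shows the two coincide. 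The case $\alpha=0$ is handled identically: inserting $\tilde F$ and $\tilde f$ into $\frac{-x\,\tilde f(x)}{\tilde F(x)\log \tilde F(x)}$ again yields the factor $\frac{1}{x}$ and matches $L_{0,X}(1/x)$.

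There is no genuine obstacle in this argument; it is a routine change of variables. The only point requiring mild care is the bookkeeping of the weight: one must verify that the explicit multiplier $x$ in the definition of $\breve L_{\alpha}$, acting on the transformed density $\tilde f(x)=\frac{1}{x^{2}}f(1/x)$, reproduces exactly the weight $1/x$ demanded when $L_{\alpha,X}$ is evaluated at the reflected point $1/x$. This is the mechanism by which the arguments of the two intensities transform correctly under $x\mapsto 1/x$, and checking it for both $\alpha\neq 0$ and $\alpha=0$ completes the proof.
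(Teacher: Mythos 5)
Your proof is correct and follows essentially the same route as the paper's own argument: both compute the cdf $F_{\frac{1}{X}}(x)=1-F_X\left(\frac{1}{x}\right)$ and pdf $f_{\frac{1}{X}}(x)=\frac{1}{x^2}f_X\left(\frac{1}{x}\right)$ of the reciprocal and then substitute into the definition of $\breve L_{\alpha}$, treating the cases $\alpha\neq 0$ and $\alpha=0$ separately. Your observation that the explicit weight $x$ combines with the Jacobian factor $\frac{1}{x^2}$ to yield the required $\frac{1}{x}$ is exactly the cancellation the paper's computation relies on.
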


\begin{proof}
We obtain an expression for the distribution function and the density function of the random variable $\frac{1}{X}$ through $X$, for $x>0$ we have
\begin{align*}
&F_{\frac{1}{X}}(x)=\mathbb P\left(\frac{1}{X}\leq x\right)=\mathbb P\left(X\ge \frac{1}{x}\right)=1-F_X\left(\frac{1}{x}\right),\\
&f_{\frac{1}{X}}(x)=\frac{1}{x^2}f_X\left(\frac{1}{x}\right).
\end{align*}
If $\alpha=0$ we have, for $x>0$,
\begin{align*}
\breve L_{0,\frac{1}{X}}(x)&=\breve L_{\frac{1}{X}}(x)=\frac{-xf_{\frac{1}{X}}(x)}{F_{\frac{1}{X}}(x)\log F_{\frac{1}{X}}(x)}\\
&=\frac{-\frac{1}{x}f_X\left(\frac{1}{x}\right)}{(1-F_X\left(\frac{1}{x}\right))\log(1-F_X\left(\frac{1}{x}\right))}=L_X\left(\frac{1}{x}\right)=L_{0,X}\left(\frac{1}{x}\right).
\end{align*}
If $\alpha\ne0$ we have, for $x>0$,
\begin{align*}
\breve L_{\alpha,\frac{1}{X}}(x)&=\frac{\alpha x(F_{\frac{1}{X}}(x))^{\alpha-1}f_{\frac{1}{X}}(x)}{1-(F_{\frac{1}{X}}(x))^{\alpha}}\\
&=\frac{\alpha\frac{1}{x}\left(1-F_X\left(\frac{1}{x}\right)\right)^{\alpha-1}f_X\left(\frac{1}{x}\right)}{1-(1-F_X\left(\frac{1}{x}\right))^{\alpha}}=L_{\alpha,X}\left(\frac{1}{x}\right).
\end{align*}
\end{proof}

\begin{definition}
Let $X$ and $Y$ be non--negative and absolutely continuous random variables and let $\alpha$ be a real number. We say that $X$ is smaller than $Y$ in the $\alpha$--generalized reversed aging intensity order, $X\leq_{\alpha RAI}Y$, if and only if $\breve L_{\alpha,X}(x)\leq \breve L_{\alpha,Y}(x)$, $\forall x\in(0,+\infty)$.
\end{definition}

In the next lemma we show a relationship between the $\alpha RAI$ order and the $\alpha AI$ order. We recall that $X\leq_{\alpha AI}Y$ if and only if $L_{\alpha,X}(x)\ge L_{\alpha,Y}(x)$, $\forall x\in(0,+\infty)$.

\begin{lemma}
\label{lemmaR}
Let $X$ and $Y$ be non--negative and absolutely continuous random variables and let $\alpha$ be a real number. We have $X\leq_{\alpha RAI}Y$ if and only if $\frac{1}{X}\ge_{\alpha AI}\frac{1}{Y}$.
\end{lemma}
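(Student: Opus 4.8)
The plan is to reduce everything to Proposition~\ref{pr}, which translates an $\alpha$--generalized aging intensity function into the $\alpha$--generalized reversed aging intensity function of the reciprocal variable. The only genuine content of the lemma is bookkeeping: matching the definitions of the two orders and tracking how the reciprocal operation together with the change of argument $x\mapsto 1/x$ interacts with the direction of the inequalities.

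First I would apply Proposition~\ref{pr} with the roles reversed. Writing the proposition as $L_{\alpha,Z}(1/x)=\breve L_{\alpha,1/Z}(x)$ and taking $Z=1/X$ (so that $1/Z=X$, which is legitimate since $X>0$ almost surely), I obtain $\breve L_{\alpha,X}(x)=L_{\alpha,1/X}(1/x)$ for every $x\in(0,+\infty)$, and likewise $\breve L_{\alpha,Y}(x)=L_{\alpha,1/Y}(1/x)$. These two identities convert any statement about the reversed aging intensities of $X$ and $Y$ into a statement about the ordinary aging intensities of $1/X$ and $1/Y$ evaluated at the reciprocal point.

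Next I would unfold the definition of the order. By definition $X\leq_{\alpha RAI}Y$ means $\breve L_{\alpha,X}(x)\le \breve L_{\alpha,Y}(x)$ for all $x>0$; substituting the identities above, this is equivalent to $L_{\alpha,1/X}(1/x)\le L_{\alpha,1/Y}(1/x)$ for all $x>0$. Since $x\mapsto 1/x$ is a bijection of $(0,+\infty)$ onto itself, the quantifier ``for all $x>0$'' becomes ``for all $y>0$'' after setting $y=1/x$, so the condition is equivalent to $L_{\alpha,1/X}(y)\le L_{\alpha,1/Y}(y)$ for all $y>0$.

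Finally I would read off the $\alpha AI$ order. Recalling that $U\leq_{\alpha AI}V$ is defined by $L_{\alpha,U}\ge L_{\alpha,V}$ --- note the reversal --- the inequality $L_{\alpha,1/X}\le L_{\alpha,1/Y}$ says exactly $\frac{1}{Y}\leq_{\alpha AI}\frac{1}{X}$, that is $\frac{1}{X}\ge_{\alpha AI}\frac{1}{Y}$, which is the desired conclusion. Since the argument is a chain of equivalences, it yields both implications at once. The one step that needs care, and really the only place an error could slip in, is keeping the two sign reversals straight: one coming from the defining inequality of the $\alpha AI$ order, and one implicit in passing from $X$ to $1/X$ through the reciprocal argument. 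I expect no analytic obstacle, only this bookkeeping.
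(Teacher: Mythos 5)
Your proposal is correct and takes essentially the same route as the paper: the paper's proof likewise applies Proposition~\ref{pr} with the roles reversed (i.e.\ to $1/X$ and $1/Y$) to rewrite $\breve L_{\alpha,X}(x)\leq \breve L_{\alpha,Y}(x)$ as $L_{\alpha,\frac{1}{X}}\left(\frac{1}{x}\right)\leq L_{\alpha,\frac{1}{Y}}\left(\frac{1}{x}\right)$, and then reads off $\frac{1}{X}\ge_{\alpha AI}\frac{1}{Y}$ from the definition of the $\alpha AI$ order. Your explicit handling of the change of variable $y=1/x$ and of the inequality reversal in the $\alpha AI$ definition only spells out steps the paper leaves implicit.
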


\begin{proof}
We have $X\leq_{\alpha RAI}Y$ if and only if $\breve L_{\alpha,X}(x)\leq \breve L_{\alpha,Y}(x)$, $\forall x\in(0,+\infty)$. By proposition~\ref{pr} this is equivalent to $L_{\alpha,\frac{1}{X}}\left(\frac{1}{x}\right)\leq L_{\alpha,\frac{1}{Y}}\left(\frac{1}{x}\right)$, $\forall x\in(0,+\infty)$, i.e. $\frac{1}{X}\ge_{\alpha AI}\frac{1}{Y}$.
\end{proof}

\begin{remark}
\label{rem2}
For particular choices of the real number $\alpha$ we find some relationship with other stochastic orders. Obviously, the reversed aging intensity order coincides with the $0$--generalized reversed aging intensity order. For $\alpha=1$ we have showed in remark~\ref{rem} that $\breve L_{1,X}(x)=xr_X(x),$
so we get a relationship with the hazard rate order. In fact,
\begin{align}
\nonumber
X\leq_{hr}Y &\Leftrightarrow r_X(x)\ge r_Y(x), \forall x>0 \Leftrightarrow xr_X(x)\ge xr_Y(x), \forall x>0\\
&\Leftrightarrow \breve L_{1,X}(x)\ge \breve L_{1,Y}(x), \forall x>0 \Leftrightarrow X\ge_{1RAI}Y.
\end{align}
For $\alpha=-1$ we have showed in remark~\ref{rem} that $\breve L_{-1,X}(x)=xLOR_X(x)=L_{-1,X}(x),$
so we get a relationship with the log-odds rate order. In fact,
\begin{align}
\nonumber
X\leq_{LOR}Y &\Leftrightarrow LOR_X(x)\ge LOR_Y(x), \forall x>0 \Leftrightarrow xLOR_X(x)\ge xLOR_Y(x), \forall x>0\\
&\Leftrightarrow \breve L_{-1,X}(x)\ge \breve L_{-1,Y}(x), \forall x>0 \Leftrightarrow X\ge_{-1RAI}Y.
\end{align}
Moreover we have $X\ge_{-1RAI}Y \Leftrightarrow X\leq_{-1AI}Y$ so they are dual relations.
For $\alpha=n\in\mathbb N$ we have showed in remark~\ref{rem} that 
$$\breve L_{n,X}(x)=\frac{n x(F_X(x))^{n-1}f_X(x)}{1-(F_X(x))^{n}}=xr_{X_{(n)}}(x),$$
so there is a connection with the largest order statistic and the hazard rate order. In fact
\begin{align}
\nonumber
X_{(n)}\leq_{hr}Y_{(n)} &\Leftrightarrow r_{X_{(n)}}(x)\ge r_{Y_{(n)}}(x), \forall x>0 \Leftrightarrow xr_{X_{(n)}}(x)\ge xr_{Y_{(n)}}(x), \forall x>0\\
&\Leftrightarrow \breve L_{n,X}(x)\ge \breve L_{n,Y}(x), \forall x>0 \Leftrightarrow X\ge_{nRAI}Y.
\end{align}
\end{remark}

\begin{remark}
Lemma \ref{lemmaR} and Remark \ref{rem2} provide the following series of relations
\begin{equation*}
X\leq_{-1RAI}Y \Leftrightarrow \frac{1}{X}\leq_{-1RAI}\frac{1}{Y}\Leftrightarrow X\ge_{-1AI}Y\Leftrightarrow \frac{1}{X}\ge_{-1AI}\frac{1}{Y}.
\end{equation*}
\end{remark}

\begin{proposition}
\label{pp}
Let $X$ and $Y$ be non--negative and absolutely continuous random variables such that $X\leq_{st}Y$, i.e., $F_X(x)\ge F_Y(x)$ for all $x>0$.
\begin{itemize}
\item[(1)] If exists $\beta\in\mathbb R$ such that $X\leq_{\beta RAI} Y$ then for all $\alpha<\beta$ we have $X\leq_{\alpha RAI} Y$;
\item[(2)] If exists $\beta\in\mathbb R$ such that $X\ge_{\beta RAI} Y$ then for all $\alpha>\beta$ we have $X\ge_{\alpha RAI} Y$.
\end{itemize}
\end{proposition}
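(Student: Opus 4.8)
The plan is to fix $x\in(0,+\infty)$ and reduce both assertions to a single monotonicity statement in $\alpha$. Write $p=F_X(x)$ and $q=F_Y(x)$; the hypothesis $X\le_{st}Y$ means precisely $p\ge q$, and both lie in $(0,1)$ at the points that matter. Using the factorization $\breve L_{\alpha}(x)=\frac{xf(x)}{F(x)}\,h_{F(x)}(\alpha)$ with $h_c(\alpha)=\frac{\alpha c^{\alpha}}{1-c^{\alpha}}$ already established in the proof of Proposition~2.1 (and recalling $h_c(\alpha)>0$ for every $c\in(0,1)$), I would write, at each $x$ with $f_Y(x)>0$ and $q\in(0,1)$,
$$\frac{\breve L_{\alpha,X}(x)}{\breve L_{\alpha,Y}(x)}=\frac{f_X(x)\,q}{f_Y(x)\,p}\,\psi_x(\alpha),\qquad \psi_x(\alpha)=\frac{h_p(\alpha)}{h_q(\alpha)}=\left(\frac{p}{q}\right)^{\!\alpha}\frac{1-q^{\alpha}}{1-p^{\alpha}},$$
where the prefactor is a strictly positive constant not depending on $\alpha$. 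Hence the whole proposition follows once I show that $\psi_x$ is non-decreasing in $\alpha$ for each fixed $x$.

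To prove this I differentiate logarithmically. Since $\psi_x(\alpha)>0$, a direct computation gives $\frac{\mathrm d}{\mathrm d\alpha}\log\psi_x(\alpha)=g(p)-g(q)$, where $g(c)=\frac{\log c}{1-c^{\alpha}}$, after using the identity $\log c+\frac{c^{\alpha}\log c}{1-c^{\alpha}}=\frac{\log c}{1-c^{\alpha}}$. It then suffices to show that $g$ is increasing on $(0,1)$. A short calculation shows that the sign of $g'(c)$ equals the sign of $1-c^{\alpha}+\alpha c^{\alpha}\log c=\kappa(c^{\alpha})$, where $\kappa(t)=1-t+t\log t$. Since $\kappa'(t)=\log t$, the function $\kappa$ attains its minimum at $t=1$ with $\kappa(1)=0$, so $\kappa\ge 0$ and in fact $\kappa(c^{\alpha})>0$ for $c\in(0,1)$, $\alpha\ne 0$. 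Thus $g$ is strictly increasing on $(0,1)$, and as $p\ge q$ we get $g(p)\ge g(q)$, i.e. $\frac{\mathrm d}{\mathrm d\alpha}\log\psi_x\ge 0$. Therefore $\psi_x$, and with it the ratio $\breve L_{\alpha,X}(x)/\breve L_{\alpha,Y}(x)$, is non-decreasing in $\alpha$; the value at $\alpha=0$ is covered by continuity of $h_c$ at $0$.

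With this monotonicity in hand the conclusion is immediate. For part~(1), $X\le_{\beta RAI}Y$ means $\breve L_{\beta,X}(x)\le\breve L_{\beta,Y}(x)$, i.e. the ratio is $\le 1$ at $\alpha=\beta$, for every $x$; since the ratio is non-decreasing in $\alpha$, it stays $\le 1$ for all $\alpha<\beta$ and all $x$, which is exactly $X\le_{\alpha RAI}Y$. Part~(2) is symmetric: $X\ge_{\beta RAI}Y$ forces the ratio $\ge 1$ at $\beta$, hence $\ge 1$ for all $\alpha>\beta$, giving $X\ge_{\alpha RAI}Y$. Finally I would dispose of the degenerate points: where $f_Y(x)=0$ the hypothesis $\breve L_{\beta,X}(x)\le\breve L_{\beta,Y}(x)=0$ together with $\breve L\ge 0$ forces $f_X(x)=0$, so $\breve L_{\alpha,X}(x)=\breve L_{\alpha,Y}(x)=0$ for every $\alpha$ and the comparison holds trivially; the analogous observation handles the endpoints of the support. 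I expect the main obstacle to be precisely the inequality $\kappa(t)=1-t+t\log t\ge 0$ driving the monotonicity of $g$, and the bookkeeping needed to make the ratio argument valid uniformly in $x$, including the degenerate points just described.
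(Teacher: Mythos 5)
Your proof is correct, but it takes a genuinely different route from the paper's. The paper disposes of Proposition~\ref{pp} in two lines by transfer: from $X\leq_{\beta RAI}Y$ and Lemma~\ref{lemmaR} it obtains $\frac{1}{X}\ge_{\beta AI}\frac{1}{Y}$, notes that $X\leq_{st}Y$ gives $\frac{1}{X}\ge_{st}\frac{1}{Y}$, and then invokes Proposition 4 of Szymkowiak (2018b) --- the analogous statement for $\alpha$--generalized aging intensity orders --- before translating back via Lemma~\ref{lemmaR}; part (2) is declared analogous. You instead argue directly and self-containedly: you reuse the factorization $\breve L_{\alpha}(x)=\frac{xf(x)}{F(x)}h_{F(x)}(\alpha)$ from the proof of Proposition 2.1, and show that $\psi_x(\alpha)=h_p(\alpha)/h_q(\alpha)$ with $p=F_X(x)\ge q=F_Y(x)$ is non-decreasing in $\alpha$, reducing this to the monotonicity of $g(c)=\log c/(1-c^{\alpha})$ on $(0,1)$, which rests on $\kappa(t)=1-t+t\log t\ge 0$ --- the multiplicative companion of the inequality $1-t+\log t\leq 0$ that the paper uses to prove Proposition 2.1. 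Your computations check out: the logarithmic-derivative identity, the sign analysis of $g'$, the continuity patch at $\alpha=0$, and the reduction of both parts to monotonicity of the ratio are all sound; your handling of degenerate points is adequately flagged, though for part (2) the symmetric case $f_X(x)=0$ (rather than $f_Y(x)=0$) is the relevant one. As to what each approach buys: the paper's proof is shorter and consistent with its overall strategy of porting known $\alpha AI$ results to the reversed setting through $X\mapsto\frac{1}{X}$, but it leans on an external result; yours is self-contained, makes transparent exactly where the hypothesis $X\leq_{st}Y$ enters (through $g(p)\ge g(q)$), yields strict inequalities whenever $F_X(x)>F_Y(x)$, and in effect reproves the cited external proposition along the way.
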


\begin{proof}
$(1)$. From $X\leq_{\beta RAI} Y$ and lemma~\ref{lemmaR} we have $\frac{1}{X}\ge_{\beta AI}\frac{1}{Y}$. Moreover from $X\leq_{st}Y$ we get $\frac{1}{X}\ge_{st}\frac{1}{Y}$ so by proposition $4$ of Szymkowiak (2018b) we obtain that $\forall\alpha<\beta$  $\frac{1}{X}\ge_{\alpha AI}\frac{1}{Y}$, i.e., $X\leq_{\alpha RAI}Y$.

The proof of part $(2)$ is analogous.
\end{proof}

\begin{proposition}
\label{pp2}
Let $X$ and $Y$ be non--negative and absolutely continuous random variables.
\begin{itemize}
\item[(1)] If exists $\beta\in\mathbb R$ such that for all $\alpha<\beta$ we have $X\ge_{\alpha RAI} Y$ then $X\ge_{rh} Y$, i.e., $\breve r_X(x)\ge\breve r_Y(x)$ for all $x>0$;
\item[(2)] If exists $\beta\in\mathbb R$ such that for all $\alpha>\beta$ we have $X\leq_{\alpha RAI} Y$ then $X\ge_{st} Y$.
\end{itemize}
\end{proposition}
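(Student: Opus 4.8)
The plan is to handle the two parts through opposite asymptotics in $\alpha$: part (1) by letting $\alpha\to-\infty$ pointwise, and part (2) by fixing a single admissible positive $\alpha$ and integrating the defining identity from the proof of Theorem~\ref{T21}.

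For part (1) I would first rewrite the $\alpha$--generalized reversed aging intensity in terms of the ordinary reversed hazard rate. From \eqref{eq4}, for $\alpha<0$ and any fixed $x>0$,
\begin{equation*}
\frac{\breve L_{\alpha,Z}(x)}{-\alpha}=\frac{x F_Z^{\alpha-1}(x)f_Z(x)}{F_Z^{\alpha}(x)-1}=x\,\breve r_Z(x)\,\frac{F_Z^{\alpha}(x)}{F_Z^{\alpha}(x)-1},\qquad Z\in\{X,Y\}.
\end{equation*}
Since $F_Z(x)\in(0,1)$, we have $F_Z^{\alpha}(x)\to+\infty$ as $\alpha\to-\infty$, so the last factor tends to $1$ and hence $\frac{\breve L_{\alpha,Z}(x)}{-\alpha}\to x\,\breve r_Z(x)$. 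The hypothesis gives $\breve L_{\alpha,X}(x)\ge\breve L_{\alpha,Y}(x)$ for every $\alpha<\beta$; dividing by $-\alpha>0$ preserves the inequality, and passing to the limit $\alpha\to-\infty$ yields $x\,\breve r_X(x)\ge x\,\breve r_Y(x)$, i.e.\ $\breve r_X(x)\ge\breve r_Y(x)$. Since $x$ is arbitrary, this is exactly $X\ge_{rh}Y$.

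For part (2) I would instead exploit the integral identity already established inside the proof of Theorem~\ref{T21}. Fix any $\alpha_0>\max(\beta,0)$, which exists because $\{\alpha:\alpha>\beta\}$ is unbounded above; by hypothesis $X\le_{\alpha_0 RAI}Y$, so $\breve L_{\alpha_0,X}(t)\le\breve L_{\alpha_0,Y}(t)$ for all $t>0$. Dividing by $t>0$ and integrating over $(0,x)$ (the integrals converge by Remark~\ref{ossnuova2}, as $\alpha_0>0$), and using
\begin{equation*}
\int_0^x\frac{\breve L_{\alpha_0,Z}(t)}{t}\,\mathrm dt=-\log\bigl(1-F_Z^{\alpha_0}(x)\bigr),\qquad Z\in\{X,Y\},
\end{equation*}
gives $-\log(1-F_X^{\alpha_0}(x))\le-\log(1-F_Y^{\alpha_0}(x))$, whence $F_X^{\alpha_0}(x)\le F_Y^{\alpha_0}(x)$. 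Because $\alpha_0>0$ the map $u\mapsto u^{1/\alpha_0}$ is increasing, so $F_X(x)\le F_Y(x)$ for every $x>0$, i.e.\ $X\ge_{st}Y$.

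The algebra is routine; the one point demanding care is the asymptotic evaluation in part (1), where the whole argument hinges on $F_Z^{\alpha}(x)\to+\infty$ forcing the correction factor $F_Z^{\alpha}(x)/(F_Z^{\alpha}(x)-1)$ to $1$, so that normalization by $-\alpha$ is exactly what recovers $\breve r_Z$ in the limit. Part (2) is cleaner because the integral identity converts the pointwise $\alpha_0 RAI$ inequality directly into a comparison of $F_X^{\alpha_0}$ and $F_Y^{\alpha_0}$, with no limit required: a single admissible $\alpha_0$ suffices. As an alternative one could route both parts through Lemma~\ref{lemmaR}, reducing them to statements on the $\alpha AI$ order of $\frac{1}{X},\frac{1}{Y}$ together with the inversion dualities $X\ge_{rh}Y\Leftrightarrow\frac{1}{X}\le_{hr}\frac{1}{Y}$ and $X\ge_{st}Y\Leftrightarrow\frac{1}{X}\le_{st}\frac{1}{Y}$, but the direct asymptotic argument above is self-contained.
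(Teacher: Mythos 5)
Your proof is correct, but it is genuinely different from the paper's. The paper disposes of both parts in two lines: it converts $X\ge_{\alpha RAI}Y$ into $\frac{1}{X}\leq_{\alpha AI}\frac{1}{Y}$ via Lemma~\ref{lemmaR} and then invokes Proposition~5 of Szymkowiak (2018b) for the $\alpha AI$ order, translating the resulting $\frac{1}{X}\leq_{hr}\frac{1}{Y}$ (resp.\ $\frac{1}{X}\leq_{st}\frac{1}{Y}$) back through inversion --- exactly the ``alternative route'' you sketch in your closing sentence. Your argument instead stays inside the paper's own toolkit: in part (1) the normalization $\breve L_{\alpha,Z}(x)/(-\alpha)=x\,\breve r_Z(x)\,F_Z^{\alpha}(x)/(F_Z^{\alpha}(x)-1)\to x\,\breve r_Z(x)$ as $\alpha\to-\infty$ (valid since $0<F_Z(x)<1$, the same implicit assumption the paper makes), and in part (2) the integral identity $\int_0^x \breve L_{\alpha_0,Z}(t)/t\,\mathrm dt=-\log\bigl(1-F_Z^{\alpha_0}(x)\bigr)$ from the proof of Theorem~\ref{T21}. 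This buys two things the paper's proof does not give: self-containedness (no dependence on the external Proposition~5 of Szymkowiak 2018b), and a strictly stronger conclusion in part (2), since your argument needs only one $\alpha_0>\max(\beta,0)$ with $X\leq_{\alpha_0 RAI}Y$ rather than the full range $\alpha>\beta$; likewise part (1) uses the hypothesis only along a sequence $\alpha\to-\infty$. Two cosmetic points: in part (1) you should say explicitly that you restrict to $\alpha<\min(\beta,0)$ before dividing by $-\alpha>0$, and in part (2) the identity relies on $F_Z(0^+)=0$, which holds because $Z$ is non-negative and absolutely continuous.
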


\begin{proof}
$(1)$. From $X\ge_{\alpha RAI} Y$ and lemma~\ref{lemmaR} we have $\frac{1}{X}\leq_{\alpha AI}\frac{1}{Y}$, $\forall \alpha<\beta$. So with use of proposition $5$ of Szymkowiak (2018b) we obtain $\frac{1}{X}\leq_{hr}\frac{1}{Y}$, i.e. $X\ge_{rh}Y$.

The proof of part $(2)$ is analogous.
\end{proof}

\begin{corollary}
Let $X$ and $Y$ be non--negative and absolutely continuous random variables.
\begin{itemize}
\item[(1)] $X\leq_{st}Y$ and $X\ge_{LOR}Y\Rightarrow$ $X\leq_{\alpha RAI} Y$ for all $\alpha\in(-\infty,-1)$;
\item[(2)] $X\leq_{st}Y$ and $X\leq_{LOR}Y\Rightarrow$ $X\ge_{\alpha RAI} Y$ for all $\alpha\in(-1,+\infty)$.
\end{itemize}
\end{corollary}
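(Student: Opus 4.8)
The plan is to observe that both implications are immediate consequences of Proposition~\ref{pp} applied with the distinguished value $\beta=-1$, once the log--odds rate order has been rephrased as a $(-1)RAI$ order. Thus no new computation is needed: the entire argument is a translation followed by a single invocation of the monotonicity-in-$\alpha$ result already established.

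First I would recall from Remark~\ref{rem2} that $\breve L_{-1,X}(x)=x\,LOR_X(x)$, so that the chain of equivalences proved there gives
$$X\leq_{LOR}Y \iff X\ge_{-1RAI}Y,$$
and dually $X\ge_{LOR}Y \iff X\leq_{-1RAI}Y$. This is the one substantive step, and the only place where care is required, since the direction of the inequality flips when passing between the two orders.

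For part~(1), the hypothesis $X\ge_{LOR}Y$ becomes $X\leq_{-1RAI}Y$. Together with the standing assumption $X\leq_{st}Y$, I would then apply Proposition~\ref{pp}(1) with $\beta=-1$, which yields $X\leq_{\alpha RAI}Y$ for every $\alpha<-1$, i.e.\ for all $\alpha\in(-\infty,-1)$. For part~(2), the hypothesis $X\leq_{LOR}Y$ becomes $X\ge_{-1RAI}Y$; combining this with $X\leq_{st}Y$ and invoking Proposition~\ref{pp}(2) with $\beta=-1$ gives $X\ge_{\alpha RAI}Y$ for every $\alpha>-1$, i.e.\ for all $\alpha\in(-1,+\infty)$.

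The main (indeed the only) obstacle is bookkeeping: keeping the two order reversals consistent, namely the reversal inside the $LOR\leftrightarrow(-1)RAI$ equivalence and the fact that Proposition~\ref{pp} sends an upper bound at $\beta$ down to all $\alpha<\beta$ but a lower bound at $\beta$ up to all $\alpha>\beta$. Once $\beta=-1$ is fixed and these directions are aligned, the corollary follows with no further work.
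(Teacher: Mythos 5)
Your proof is correct and is essentially identical to the paper's own argument: the paper likewise rewrites $X\ge_{LOR}Y$ as $X\leq_{-1RAI}Y$ via the equivalence in Remark~\ref{rem2} and then concludes by Proposition~\ref{pp} with $\beta=-1$, treating part (2) as analogous. Your version just spells out the bookkeeping of the two order reversals more explicitly than the paper does.
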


\begin{proof}
$(1)$. We have $X\ge_{LOR}Y\Leftrightarrow X\leq_{-1RAI} Y$ so the proof is completed with use of proposition~\ref{pp}.

The proof of part $(2)$ is analogous.
\end{proof}

\section{Application of $\alpha$--generalized reversed aging intensity function in data analysis}

Very often it is really a difficult task to recognize the lifetime data distribution analyzing only the shapes of their density and distribution function estimators. But sometimes, the corresponding $\alpha$--generalized reversed aging intensity function for a properly chosen $\alpha$
can have a relatively simple form, and it can be easily recognized with use of the respective reversed aging intensity estimate.

For some distribution $F$ with support $x\in(0, +\infty)$, we obtain a natural estimator of the $\alpha$--generalized reversed aging intensity function

\begin{equation}\label{f1}
\widehat{\breve{L}}_{\alpha}(x)
=
\left\{ \begin{array}{lll}
\frac{\alpha\, x\,\widehat{f}(x) [\widehat{F}(x)]^{\alpha-1} }{1-[\widehat{F}(x)]^{\alpha}} & \textrm{for}&  x>0,\ \ \alpha\neq 0\\
-\frac{x\, \widehat{f}(x)}{\widehat{F}(x)\ln[\widehat{F}(x)]}& \textrm{for}&x>0,\ \ \alpha= 0,
\end{array} \right.
\end{equation}
where $\widehat{f}$ denotes a nonparametric estimate 
of the unknown density function $f$ and $\widehat{F}(x)= \int_0^x \widehat{f}(t)\mathrm{d}t$ represents the corresponding distribution function estimate. The proposed estimation of the aging intensity function is possible if we assume that data follow an absolutely continuous distribution with support $(0, +\infty)$ and if the nonparametric estimate of its density function exists.  Moreover, larger sample sizes generally lead to increased precision of estimation. We perform our study for both the generated and the real data.

\subsection{Analysis of $\alpha$--generalized reversed aging intensity function through generated data}
In the following example we consider an application of the estimator (\ref{f1}) for $\alpha=-1$ to verify the hypothesis that some simulated data come
from the family of inverse log-logistic distributions.

\begin{example}\label{ex1}
{\upshape Our goal is to check if a member of the inverse log-logistic distributions $invLLog(\gamma, \lambda)$ 
with the distribution function given by
\begin{equation}\label{f2}
F_{\gamma, \lambda}(x) =\left[1+\left(\frac{\lambda}{x}\right)^{\gamma}\right]^{-1},\ \ \ \ x\in(0, +\infty),
\end{equation}
for some unknown positive parameters of the shape $\gamma$ and the scale $\lambda$, is the parent distribution of a random sample $X_1,\ldots,X_N$.

From presented in Section 4, Example \ref{example1} we know that for distribution function (\ref{f2}), its $-1$--generalized reversed aging intensity function is constant and 
equal to $\breve{L}_{-1}(x)=\gamma$.
So, we check if the respective reversed aging intensity estimator (\ref{f1}) is indeed an accurate approximation of a constant function.

Therefore, we use the following procedure to obtain $N$ independent random variables $X_1,\ldots,X_N$ with $invLLog(\gamma, \lambda)$ lifetime distribution.
First, we generate standard uniform random variables $U_1,\ldots,U_N$ using function \texttt{random} of {\it MATLAB}. 
Then, applying the inverse transform technique with $F_{\gamma, \lambda}(x)= \left[1+\left(\frac{\lambda}{x}\right)^{\gamma}\right]^{-1}$,
we get $Y_i= F^{-1}_{\gamma, \lambda}(1-U_i)=\lambda \left(\frac{1}{1-U_i}-1\right)^{-\frac{1}{\gamma}}$, $i=1, \ldots , N$, with the inverse log-logistic distribution
$invLLog(\gamma, \lambda)$.
In this way, applying the function \texttt{random} with the \texttt{seed}$=88$, we generate  $N=1000$ independent inverse log-logistic
random variables with the shape parameter $\gamma=4$, and the scale parameter $\lambda=0.5$.

To calculate the reversed aging intensity estimator (\ref{f1}), we apply a kernel density estimator (see Bowman and Azzalini 1997), i.e., given in {\it MATLAB} \texttt{ksdensity} function,
\begin{equation}\label{f3}
\widehat{f}(x)=\frac{1}{N\, h}\sum_{j=1}^{N}K\left(\frac{x-X_j}{h}\right),
\end{equation}
with a chosen normal kernel smoothing function and a selected bandwidth $h=0.05$.
Then, the kernel estimator of the distribution function is equal to
$$
\widehat{F}(x)=\frac{1}{N}\sum_{j=1}^{N}I\left(\frac{x-X_j}{h}\right) ,
$$
where $I(x)=\int_{-\infty}^x K(t)\mathrm{d}t$. 
The obtained $-1$--generalized reversed aging intensity function estimate (\ref{f1}) is equal to

\vspace{-0,5cm}
\begin{equation}\label{f4}
\widehat{\breve{L}}_{-1}(x) =\frac{ x\,\widehat{f}(x)}{\widehat{F}(x)\left[1-\widehat{F}(x)\right]}=\frac{x\, \frac{1}{N\, h}\sum_{j=1}^{N}K\left(\frac{x-X_j}{h}\right)}{\frac{1}{N}\sum_{j=1}^{N}I\left(\frac{x-X_j}{h}\right)\left[1-\frac{1}{N}\sum_{j=1}^{N}I\left(\frac{x-X_j}{h}\right)\right]}.
\end{equation}
For our simulation data, the plot of the density estimator (\ref{f3}) is presented in Figure \ref{pdf_invLLog}.

\begin{figure}[h!]
\vspace{-7cm}
\begin{center}
\includegraphics[width=01\textwidth]{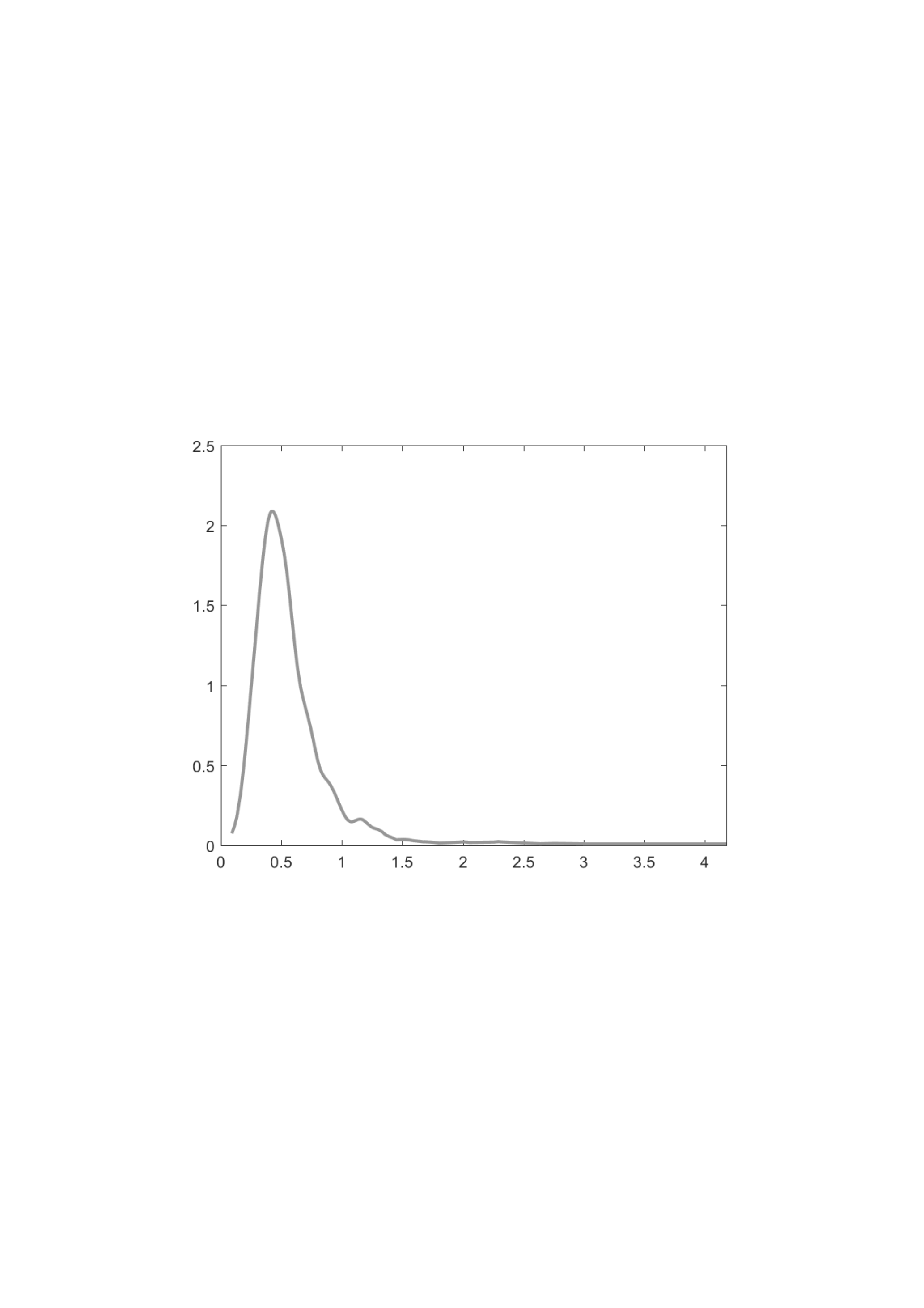}
\vspace{-8,5cm}
\caption{Density estimator $\widehat{f}(x)$ for the data from Example \ref{ex1}}\label{pdf_invLLog}
\end{center}
\vspace{-0,5cm}
\end{figure}

Analyzing the plot, it is not easy to decide if the density function belongs to the inverse log-logistic family. 
But, we can notice that the plot of respective estimator (\ref{f4}) of $-1$--generalized reversed aging intensity function $\widehat{\breve{L}}_{-1}(x)$ (see Figure \ref{RAI_invLLog}),
oscillates around a constant function, 
especially after removing few outlying values at the right-end. 
This gives us the motivation to accept our hypothesis that an inverse log-logistic distribution is the parent distribution of the generated sample.

\begin{figure}[h!]
\vspace{-7cm}
\begin{center}
\includegraphics[width=1\textwidth]{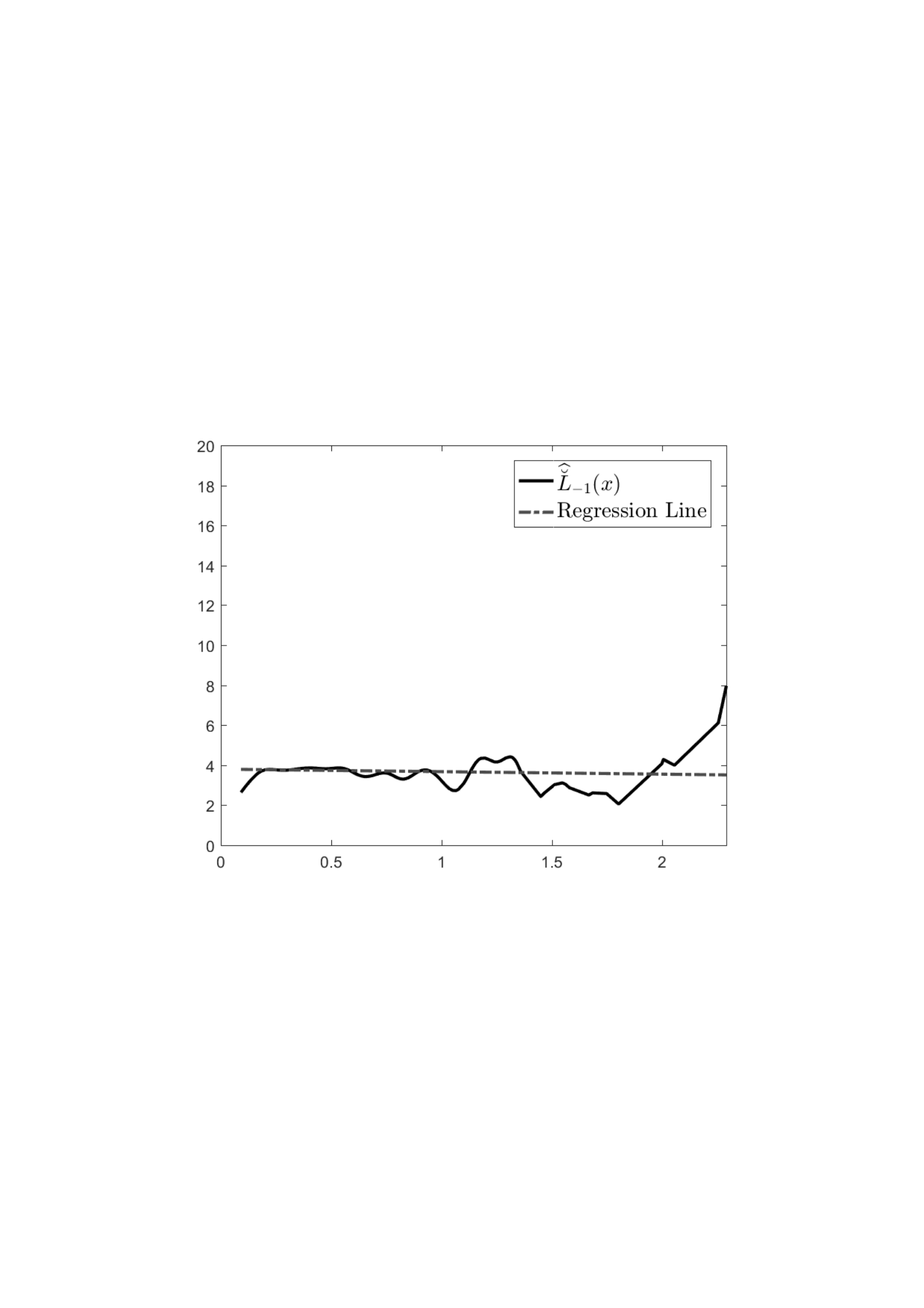}
\vspace{-8,5cm}
\caption{$\widehat{\breve{L}}_{0}(x)$
and adjusted regression line for the data from Example \ref{ex1}}\label{RAI_invLLog}
\end{center}
\vspace{-0,5cm}
\end{figure}

To justify our intuitive decision, we propose to carry out the following more formal statistical procedure.
First, we calculate the least squares estimate of the intercept which for our data equals to $\widehat{\gamma}=3.7990$. 
Next, we put it into the log-likelihood function, and determine maximum likelihood estimator (MLE) of parameter $\lambda$ maximizing it.
The problem resolves into finding the solution to the equation
$$\sum_{i=1}^N\frac{1}{\left(\frac{x_i}{\lambda}\right)^{\widehat{\gamma}}+1}=\frac{N}{2}.$$
As the result we obtain $\widehat{\lambda}=0.4957$. Note that the estimators 
$\widehat{\gamma}$ and $\widehat{\lambda}$
based on the empirical $-1$--generalized reversed aging intensity are quite precise (cf. Table \ref{Tab2}).
{\small
\begin{table}[h!]
\vspace{-0,4cm}
\caption{Parameters of $invLLog(\gamma, \lambda)$}\label{Tab2}
\vspace{-0,2cm}
\begin{center}
\begin{tabular}{|c|c|c|} \hline
&$\gamma$&$\lambda$\\ \hline
Theoretical parameters &4&0.5\\ \hline
Estimators&3.7990&0.4957\\ \hline
\end{tabular}
\vspace{-0,5cm}
\end{center}
\end{table}}

Finally, by the chi-square goodness-of-fit test we check if the data really fit the inverse log-logistic distribution. For this purpose, we apply function \texttt{histogram}, available in \texttt{MATLAB} and group the data into $k=20$ classes of observations lying into intervals
$[x_j,x_{j+1})= [x_j, x_j+\Delta x)$, $j=1,\ldots,k$,
of length $\Delta x=0.21$. The classes, together with their empirical frequencies $N_j=N_j(X_1,\ldots,X_N)$ and theoretical frequencies based on the
inverse log-logistic distribution with parameters replaced by the estimators
$n_j= N \left[F_{\widehat{\gamma},\widehat{\lambda}}(x_{j+1})- 
F_{\widehat{\gamma},\widehat{\lambda}}(x_j)\right]$, are presented in Table \ref{Tab3}.
{\small
\begin{table}[h!]
\vspace{-0,3cm}
\caption{Grouped data and respective values of empirical and theoretical frequency}\label{Tab3}
\vspace{-0,2cm}
\begin{center}
\begin{tabular}{|r|c|r|r|}\hline
class&$[x_j, x_{j+1})$ &$N_j$&$n_j$\\ \hline \hline
1&0.0000-0.2100&26&36.8543\\ \hline
2&0.2100-0.4200&322&310.6691\\ \hline
3&0.4200-0.6300&371&365.5653\\ \hline
4&0.6300-0.8400&150&168.0593\\ \hline
5&0.8400-1.0500&68&64.2263\\ \hline
6&1.0500-1.2600&29&26.5322\\ \hline
7&1.2600-1.4700&15&12.2550\\ \hline
8&1.4700-1.6800&5&6.2413\\ \hline
9&1.6800-1.8900&3&3.4409\\ \hline
10&1.8900-2.1000&3&2.0223\\ \hline
11&2.1000-2.3100&0&1.2522\\ \hline
12&2.3100-2.5200&1&0.8095
\\ \hline
13&2.5200-2.7300&2&0.5425\\ \hline
14&2.7300-2.9400&1&0.3749\\ \hline
15&2.9400-3.1500&0&0.2660\\ \hline
16&3.1500-3.3600&0&0.1931\\ \hline
17&3.3600-3.5700&0&0.1430\\ \hline
18&3.5700-3.7800&0&0.1078\\ \hline
19&3.7800-3.9900&0&0.0826\\ \hline
20&3.9900-4.200&1&0.0641\\ \hline
\end{tabular}
\vspace{-0,5cm}
\end{center}
\end{table}}

Furthermore, available in {\it MATLAB} function \texttt{chi2gof} determines the value of chi-square statistics $\chi^2=9.3209$ with $\nu= 7$
degrees of freedom (automatically joining together the last twelve classes with low frequencies) and determines the respective $p$-value, $p= 0.2304$.
It means that for a given significance level less than $0.2304$ we do not reject the hypothesis that the considered data follow the inverse log-logistic distribution.
}
\end{example}

\subsection{Analysis of $\alpha$--generalized reversed aging intensity through real data}
Next, we present an example of real data. Analyzing its estimated $\alpha$--generalized reversed aging intensity we could assume that the data follow the adequate distribution.

\begin{example}\label{E2}
{\upshape The real data (see Data Set 6.2 in Murthy et al. 2004) concern failure times of 20 components: 0.067 0.068 0.076 0.081 0.084 0.085 0.085 0.086 0.089 0.098 0.098 0.114 0.114 0.115 0.121 0.125 0.131 0.149 0.160 0.485.

For the given data, the plot of the normal kernel density estimator
(see Bowman and Azzalini 1997), obtained by {\it MATLAB} function \texttt{ksdensity} with a returned bandwidth $h=0.0147$, is presented in Figure \ref{pdf_invMW}. An analysis of the graph does not enable us to recognize the data distribution. 
\begin{figure}[h!]
\vspace{-7cm}
\begin{center}
\includegraphics[width=1\textwidth]{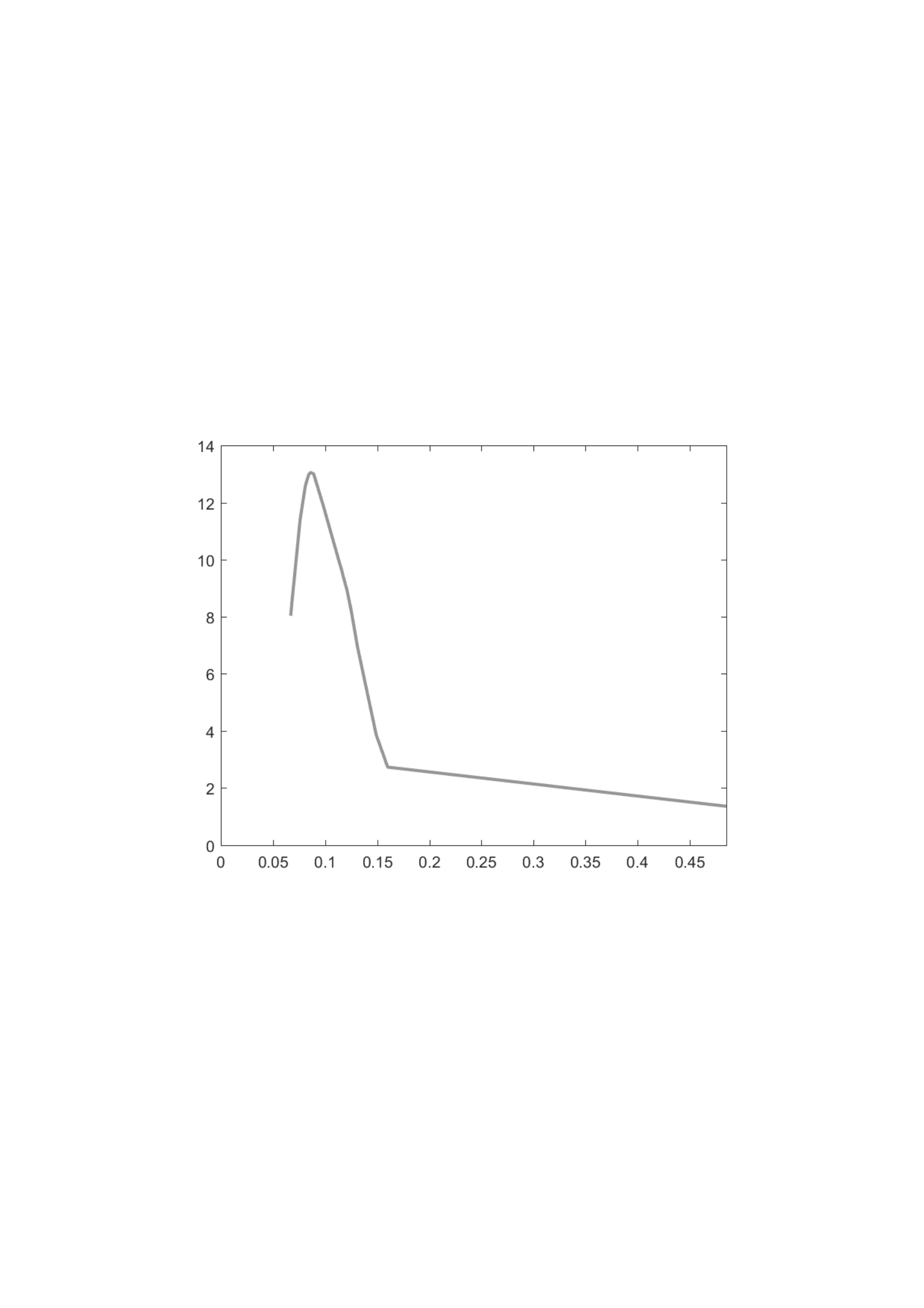}
\vspace{-8,5cm}
\caption{Kernel density estimator $\widehat{f}$ for the data from Example \ref{E2}}\label{pdf_invMW}
\end{center}
\vspace{-0,5cm}
\end{figure}
To identify the data distribution we propose to estimate $0$--generalized reversed aging intensity (see formula (\ref{f1}))
$$\widehat{\breve{L}}_{0} (x) =-\frac{x\, \widehat{f}(x)}{\widehat{F}(x)\ln[\widehat{F}(x)]}, \ \  x\in(0, +\infty).$$
\begin{figure}[h!]`	
\vspace{-7cm}
\begin{center}
\includegraphics[width=1\textwidth]{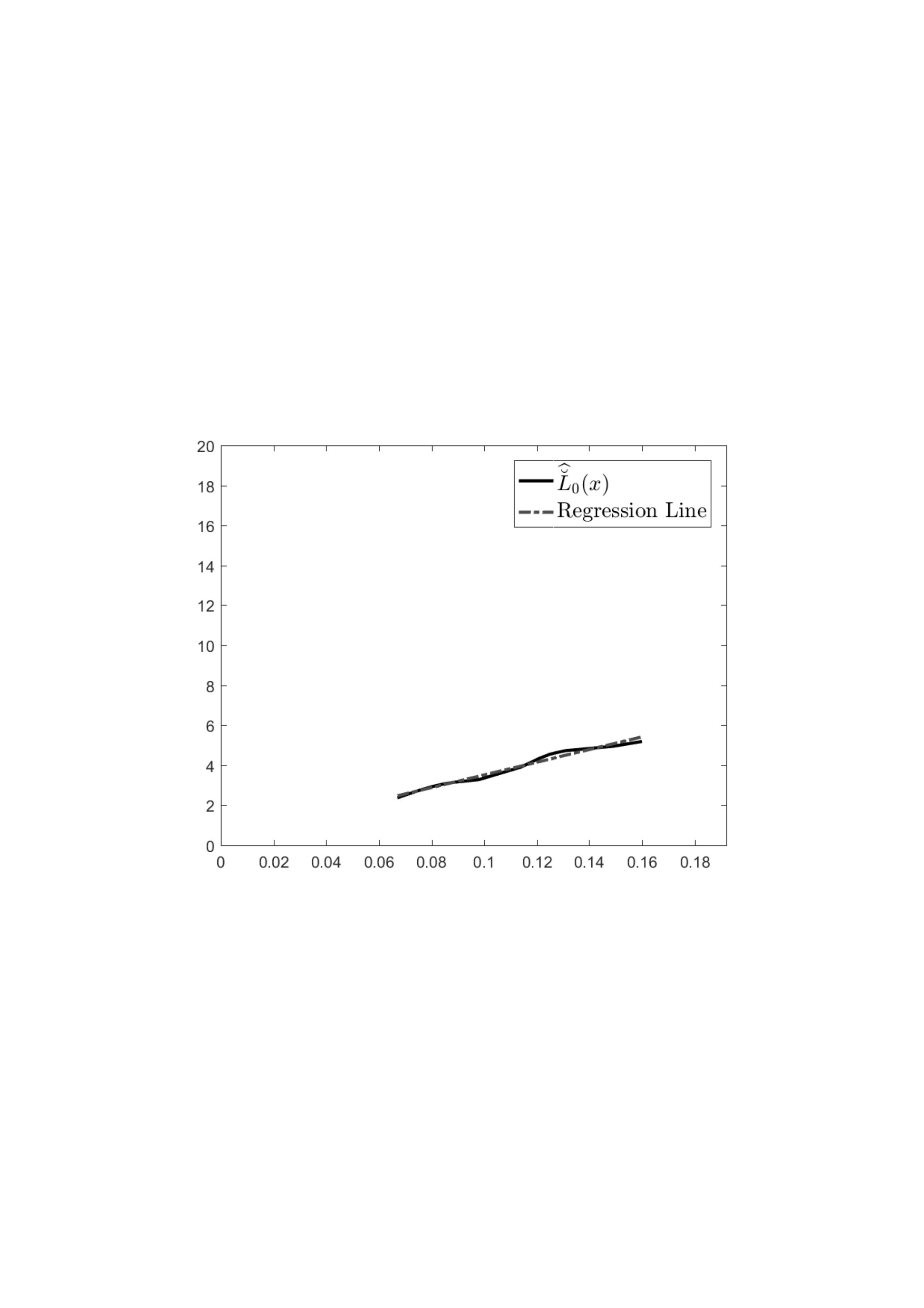}
\vspace{-8,5cm}
\caption{$\widehat{\breve{L}}_{0} (x)$,
and adjusted regression line for the data from Example \ref{E2}}\label{RAI_invMW}
\end{center}
\vspace{-0,5cm}
\end{figure}
The plot of the estimator $\widehat{\breve{L}}_0 (x)$ (see Figure \ref{RAI_invMW}) can be treated as oscillating around a linear function, especially after removing one outlying value at the right-end. This motivates us to state the hypothesis that data follow an inverse modified Weibull distribution (see Section 4, Example \ref{example2}) with distribution function
\begin{equation}\label{f5}
F_{\gamma, \lambda, \delta}(x) =\exp\left[-\left(\frac{\lambda}{x}\right)^\gamma \exp(-\delta\, x)\right],\ \ \ \ x\in(0, +\infty),
\end{equation}
and $0$--generalized reversed aging intensity function
$$\breve{L}_0(x)=\delta\, x+\gamma, \ \ \ \ x\in(0, +\infty) .$$
Moreover, we provide the following procedure. First, we determine the least squares estimates $\widehat{\gamma}=0.3441$  and $\widehat{\delta}=31.6785$ of the intercept and the slop of linear $\breve{L}_0$, respectively. Then we determine MLE of parameter $\lambda$
$$\widehat{\lambda}= \left(\frac{N}{\sum_{i=1}^{N}\frac{\exp(-\widehat{\delta}\, x_i)}{(x_i)^{\widehat{\gamma}}}}\right)^{\frac{1}{\widehat{\gamma}}}$$ which maximizes the likelihood function. Here we obtain $\widehat{\lambda}=549.9663$.

Then, to check if the data fit the inverse modified Weibull distribution we use adequate for small data the Kolmogorov-Smirnov goodness-of-fit test (avaliable in {\it MATLAB} function \texttt{kstest}), we determine statistics $K=0.1496$  and $p$-value of the test equal to $p=0.7072$. It means that for a given significance level less than $0.7072$ we do not reject the hypothesis that the considered data follow the inverse modified Weibull  distribution.

}
\end{example}

\section{Conclusion}

In this paper, a family of generalized reversed aging intensity functions was introduced and studied. In particular, it was showed that, using the generalized Pareto distribution to generalize the concept of reversed aging intensity function, for $\alpha>0$, the $\alpha$--generalized reversed aging intensity function characterizes a unique distribution function, while for $\alpha\leq0$, it determines a family of distribution functions. Moreover, $\alpha$--generalized reversed aging intensity orders were introduced and some relations with other stochastic orders were studied. Finally, analysis of $\alpha$--generalized reversed aging intensity through generated data and real one are given.

\section*{Acknowledgement}
Francesco Buono and Maria Longobardi are partially supported by the GNAMPA research group of INdAM (Istituto Nazionale di Alta Matematica) and MIUR-PRIN 2017, Project "Stochastic Models for Complex Systems" (No. 2017 JFFHSH). 

\noindent Magdalena Szymkowiak is partially supported by PUT under grant 0211/SBAD/0911.

%
%
%
%


\begin{thebibliography}{99}
\footnotesize

\bibitem{barlow}
Barlow, R.~E., and F.J. Proschan. 1996. {\em Mathematical Theory of Reliability.} Philadelphia: Society for Industrial and Applied Mathematics. 

\bibitem{barlow2}
Barlow, R.~E., and W.R. Zwet. 1969a. Asymptotic Properties of Isotonic Estimators for the Generalized Failure Rate Function. Part I: Strong Consistency. {\em Berkeley: University of California} 31: 159--176.

\bibitem{barlow3}
Barlow, R.~E., and W.R. Zwet. 1969b. Asymptotic Properties of Isotonic Estimators for the Generalized Failure Rate Function. Part II: Asymptotic Distributions. {\em Berkeley: University of California} 34: 69--110.

\bibitem{bart}
Bartoszewicz, J., 2009. On a representation of weighted distributions, {\em Statistics \& Probability Letters}, 79: 1690--1694.

\bibitem{bhatta}
Bhattacharjee, S., A.K. Nanda, and S.K. Misra. 2013. Reliability Analysis Using Ageing Intensity Function. {\em Statistics \& Probability Letters} 83: 1364--1371.

\bibitem{block}
 Block, H.W., and T.H. Savits. 1998. The Reversed Hazard Rate Function. {\em Probability in the Engineering and Informational Sciences} 12: 69--90.

\bibitem{RefBA}
Bowman, A.W. and A. Azzalini. 1997. {\em Applied Smoothing Techniques for Data Analysis}. Oxford University Press Inc., New York.

\bibitem{finkelstein}
Finkelstein, M.~S., 2002. On the Reversed Hazard Rate. {\em Reliability Engineering \& System Safety} 78: 71--75.

\bibitem{gupta}
Gupta, R.~D. and D. Kundu. 2001. Exponentiated Exponential Family: An Alternative to Gamma and Weibull Distributions. \emph{Biometrical Journal} 43: 117--130.

\bibitem{jiang}
Jiang, R., P. Ji, and X. Xiao. 2003. Aging Property of Unimodal Failure Rate Models. {\em Reliability Engineering \& System Safety} 79: 113--116.

\bibitem{murthy}
Murthy, D.N.P., M. Xie, and R. Jiang. 2004. {\em Weibull Models.} Hoboken: Wiley-Interscience.

\bibitem{nanda}
Nanda, A.K., S. Bhattacharjee, and S.S. Alam. 2007. Properties of Aging Intensity Function. {\em Statistics \& Probability Letters} 77: 365--373.

\bibitem{pickands}
Pickands, J. 1975. Statistical Inference Using Extreme Order Statistics. {\em Annals of Statistics} 3: 119--131.

\bibitem{rezaei}
Rezaei, M., and V.A. Khalef. 2014. On the Reversed Average Intensity Order. {\em Journal of Statistical Research of Iran} 11: 25--39.

\bibitem{sydsa}
Sydsaeter, K., and P. Hammond. 2012. {\em Essential Mathematics for Economic Analysis.} London: Pearson Education.

\bibitem{szymkowiak}
Szymkowiak, M. 2018a. Characterizations of Distributions through Aging Intensity. {\em IEEE Transactions on Reliability} 67: 446--458.

\bibitem{szymkowiak2}
Szymkowiak, M. 2018b. Generalized Aging Intensity Functions. {\em Reliability Engineering \& System Safety} 178: 198--208.

\bibitem{zimmer}
Zimmer, W.J., Y. Wang, and P.K. Pathak. 1998. Log-odds Rate and Monotone Log-odds Rate Distributions. {\em Journal of Quality Technology} 30: 376-385.

\end{thebibliography}
\end{document}